\newcommand{\C}{{\mathbb C}}
\newtheorem{thm}{Theorem}[section]
\newtheorem{lem}{Lemma}[section]
\newtheorem{ex}{Example}
\newtheorem{defn}{Definition}
\newtheorem{rem}{Remark}[section]
\newtheorem{obs}[thm]{Observation}
\begin{document}
\title{On Fatou sets containing Baker omitted value}

\author[1]{Subhasis Ghora\footnote{sg36@iitbbs.ac.in} }
\author[1]{Tarakanta Nayak\footnote{tnayak@iitbbs.ac.in(Corresponding author)} }
\author[2]{ Satyajit Sahoo\footnote{satyajitsahoo2010@gmail.com }}

\affil[1]{\textit{School of Basic Sciences, 
	 Indian Institute of Technology Bhubaneswar, India}  }
 
\affil[2]{\textit{P.G. Department of Mathematics, Utkal University, 	India}  }
\date{}
\maketitle
\begin{abstract}
 An omitted value of a transcendental meromorphic function $f$ is called a Baker omitted value, in short \textit{bov} if there is a disk $D$ centered at the bov such that each component of the boundary of $f^{-1}(D)$ is bounded.  Assuming that the bov is in the Fatou set of $f$, this article investigates the dynamics of the function. Firstly, the connectivity of all the Fatou components are determined. If $U$ is the Fatou component containing the bov then it is proved that a Fatou component $U'$ is infinitely connected if and only if it lands on $U$, i.e. $f^{k}(U') \subset U$
  for some $k \geq 1$. Every other Fatou component is either simply connected or lands on a Herman ring. Further, assuming that the number of critical points in the Fatou set whose forward orbits do not intersect $U$ is finite, we have shown that the connectivity of each  Fatou component belongs to a finite set. This set is independent of the Fatou components. It is proved that the Fatou component containing the bov is completely invariant whenever it is forward invariant. Further, if the invariant Fatou component is an attracting domain and compactly contains all the critical values of the function  then the Julia set is totally disconnected.  Baker domains are shown to be non-existent whenever the bov is in the Fatou set. It is also proved that, if there is a $2$-periodic Baker domain (these are not ruled out when the bov is in the Julia set), or a $2$-periodic attracting or parabolic domain containing the bov then the function has no Herman ring. Some examples exhibiting different possibilities for the Fatou set are discussed. This includes the first example of a meromorphic function with an omitted value which has two infinitely connected Fatou components. 
\end{abstract}

\textit{Keywords:}
Baker omitted value,  Fatou set, Herman rings and Transcendental meromorphic functions.\\
Mathematics Subject Classification(2010) 37F10,  37F45

\section{Introduction}
Let $\widehat{\mathbb{C}}=\mathbb{C} \bigcup \{\infty\}$ and $f:\mathbb{C}\rightarrow \widehat{\mathbb{C}}$ be a transcendental meromorphic function with a single essential singularity (which we choose to be at $\infty$) such that it has either at least two poles or exactly one pole which is not an omitted value. Such maps are known as general meromorphic functions. The $n$-th iterate of $f$ is denoted by $f^{n}$. The notion of normality of the family of functions  $\{f^{n}\}_{n>0}$ gives rise to a partition of the Riemann sphere into two sets, namely the Fatou set and the Julia set. The Fatou set of $f$, denoted by $\mathcal{F}(f)$ is the set of all points where the family of functions  $\{f^{n}\}_{n>0}$ is well-defined and in a neighbourhood of which $\{f^{n}\}_{n>0}$ is normal. This set is  also known as the stable set. The complement of the Fatou set is the Julia set, and is denoted by $\mathcal{J}(f)$. A maximally connected subset of the Fatou set is called a {Fatou component}. For a {Fatou component} $V$, $V_{k}$ denotes the {Fatou component} containing $f^{k}(V)$ for $k \geq 0$ where $V_0=V$. The connectivity of $V$, denoted by $c(V)$ is the number of components of $\widehat{\mathbb{C}} \setminus V$. We say $V$ is infinitely connected if $c(V)=\infty$. A {Fatou component} $V$ is called $p$-periodic if $p$ is the least natural number satisfying $V_p = V $. We say $V$ is invariant if $p=1$. An invariant Fatou component $V$ is called completely invariant if it is backward invariant (i.e. $f^{-1}(V) \subseteq V$). A periodic Fatou component is one of the five types, namely an attracting domain,  a parabolic domain, a Siegel disk, a Herman ring or a Baker domain. 
A $p$-periodic Fatou component $V$ is called an attracting domain or a parabolic domain if $\{f^{np}\}_{n>0}$ converges uniformly on $V$ to a $p$-periodic  attracting or parabolic point  respectively. 
 It is called a Herman ring (or a Siegel disk) if there exists an analytic homeomorphism $\phi : V \rightarrow A_r =\{z:1<|z|<r\}$ ( or $\phi : V \rightarrow D_r =\{z:|z|<r\}$) such that  $\phi(f^p(\phi^{-1}(z)))=e^{i 2 \pi \alpha}z$  for all $z \in A_r$ (or $D_r$ respectively) and for some irrational number $\alpha$.  A $p$-periodic Baker domain is one on which $\{f^{np}\}_{n>0}$ converges uniformly to a point where $f^p$ is not well-defined. If $V$ is not periodic but $V_n$ is periodic for some natural number $n$, then $V$ is called pre-periodic.  If a Fatou component is neither periodic nor pre-periodic, then it is called wandering. Further details can be found in ~\cite{ber93}.
	\par 
 A critical value is the image of a critical point, that is, $f(z_{0})\ \mbox{where}\ f'(z_{0})=0$.  The smallest natural number $k$ for which $f^{(k)}(z_0) \neq 0$ is called the local degree of $f$ at $z_0$. In this case, the local behaviour of $f$ at $z_0$ is like  $z \mapsto z^k$. A multiple pole is also considered as a critical point. The local degree of $f$ at a non-critical point is one. A point $a\in \widehat{\mathbb{C}}$ is an asymptotic value of $f$ if there exists a curve $\gamma:[0,\infty)\rightarrow \mathbb{C}$ with $\lim_{t\rightarrow \infty} \gamma(t)=\infty$ such that $ \lim_{t\rightarrow \infty} f(\gamma(t))=a$.  The set of singular values is the closure of all critical values and asymptotic values of $f$. 
 
 \par 
  A point $z_{0}\in  \mathbb{C}$ is said to be an omitted value of a function $f$ if $f(z)\neq z_{0}$ for any $z\in\mathbb{C}$.  It is well-known that there can be at most two omitted values for a meromorphic function and that each omitted value is an asymptotic value.
 Our concern is a special type of omitted value.
\begin{defn}
	
	An omitted value $b \in  \mathbb{C}$ of a meromorphic function $f$ is said to be a Baker omitted value,  in short \textit{bov} if there is a disk $D$ with center at $b$ such that each component of the boundary of $f^{-1}(D)$ is bounded.

\end{defn} 

   If $a \in \widehat{\C}$ is an asymptotic value of $f$ and   $D_r(a)$ is a disk (with respect to the spherical
metric) centered at $a$ and with radius $r$ then a component $V_r$ of $f^{-1}(D_r(a))$ can be chosen in such a way that
$V_{r_1}\subset V_{r_2}$ for $0<r_1<r_2$. In this case, $\bigcap_{r>0} V_r= \emptyset$ and the choice $r \mapsto V_r$ defines a transcendental singularity. It is usual to say that a singularity $V$ lies over $a$. The singularity $V$ lying over $a$ is called direct if there exists $r>0$ such that $f(z) \neq a$ for any $z \in V_r$. Every singularity lying over an omitted value  is always direct and there can be more than one singularity lying over an omitted value. A direct singularity is called logarithmic if  $f: V_r \to D_{r}(a) \setminus \{a\}$ is a universal covering. In this case $V_r$ is simply connected. The dynamics of functions with an asymptotic (in particular, omitted) value over which there is a logarithmic singularity is reasonably well-studied (See for example, ~\cite{berg08,tanz,ez}). The singularity lying over a bov is not logarithmic even though it is direct. There is only one singularity lying over a bov. These facts are evident from Lemma~\ref{bov-implication1}. The bov is always a limit point of critical values (Lemma~\ref{critically infinite}) giving that every meromorphic function with a bov have infinitely many singular values. In view of a large body of literature on the dynamics of functions of finite type (those with only finitely many singular values) or with a direct singularity, the  study of dynamics of functions with bov is essentially a new direction.  
  
\par  The singular values are known to control the dynamics-the Fatou and the Julia set of a function. A number of results demonstrating this can be found in ~\cite{ber93}.  The ways in which a particular type of singular value influences the dynamics seems to call for deeper investigations, especially when there are infinitely many singular values. The Baker omitted values are a special type of singular values and some aspects of their importance are already explored in ~\cite{bov}.

\par  A bov is called stable if it is in the Fatou set of the function. This article is devoted mostly to understand the Fatou set of functions with stable bov.

\par  
Though the connectivity of a periodic Fatou component is $1,2$ or $\infty$, that of a pre-periodic Fatou component can be any natural number, as shown in Theorem 6.1, ~\cite{bky-3}.  Wandering domains with connectivity $k$ for every $k>1$ are also known ~\cite{rippon-2008}. None of these works describes the connectivities of all the Fatou components of a given function. This is possible   for meromorphic functions with a stable bov. A Fatou component $V'$ is said to land on a Fatou component $V$ if $V'_{k} =V$ for some $k  \geq 1$. 
The grand orbit of $V$, denoted by $\mathcal{O}(V)$ is the set of all the Fatou components landing on $V_n$ for some $n \geq 0$. Note that every $V' \in \mathcal{O}(V)$ lands on $V$ if $V$ is periodic. For describing our results concerning meromorphic functions with a stable bov, let $$U ~\mbox{ be the Fatou component containing the bov}. $$
 The following is the first result of this article.
 
 \begin{thm}\label{tfc}
 	Let  $f$ be a meromorphic function with a stable bov. Assume that $U$ is the Fatou component of $f$ containing the bov. 
 	\begin{enumerate}
 		\item If $V \notin \mathcal{O}(U)$ and is a multiply connected Fatou component then $V$ lands on a Herman ring  and $1< c(V') < \infty$ for every $V' \in \mathcal{O}(V)$. In other words, if $V \notin \mathcal{O}(U)$ and $V$ is wandering then $c(V')=1$ for all $ V' \in \mathcal{O}(V)$.
 		
 		\item  If $U$ is periodic then  $U'$ is infinitely connected for every $U' \in \mathcal{O}(U)$.
 		
 		\item If $U$ is pre-periodic then each Fatou component landing on $U$ is infinitely connected. Every other Fatou components in $\mathcal{O}(U)$ is either simply connected  or lands on a Herman ring. In the later case,  the connectivity of each of these  Fatou components is bigger than $1$ but  finite.  
 		
 		\item If $U$ is wandering then every Fatou component landing on $U$ is infinitey connected, and all other Fatou components in $\mathcal{O}(U)$ are simply connected. 
 	\end{enumerate}
 	
 \end{thm}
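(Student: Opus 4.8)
The plan is to organise all four parts around a single equivalence: a Fatou component is infinitely connected if and only if it lands on $U$. The one elementary but decisive observation is that, because $b$ is \emph{omitted}, $b\notin f(\mathbb{C})$, so $b\in U_n$ is impossible for $n\ge 1$; among $U_0=U,U_1,U_2,\dots$ only $U_0$ contains the bov. Thus the bov can influence the connectivity of a component $V'$ only through those iterates $f^{k}(V')$ that fall into $U_0$, which is exactly what distinguishes the components landing on $U$ in every clause.

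First I would prove the forward half: any component $V'$ with $f(V')\subseteq U$ is infinitely connected. Fixing a disk $D$ witnessing the bov with $\overline D\subseteq U$, Lemma~\ref{bov-implication1} gives that the component $W$ of $f^{-1}(D)$ carrying the unique singularity over $b$ lies in $V'$ and is infinitely connected with bounded boundary components. The point to check is that the bounded complementary components (``holes'') of $W$ survive as complementary components of $V'$: since $b$ is omitted, the image of such a hole cannot cover $D$, so it equals $\widehat{\mathbb C}\setminus\overline D$ and therefore contains a pole, i.e.\ a Julia point. Hence $V'$ has infinitely many complementary components carrying Julia points and is infinitely connected. I would then propagate this backwards: if $V'_1$ is infinitely connected and $f$ maps $V'$ onto $V'_1$, each of the infinitely many complementary components of $V'_1$ pulls back to at least one complementary component of $V'$, so $V'$ is again infinitely connected. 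Induction along a landing chain $V'_k=U$ then yields that \emph{every} component landing on $U$ is infinitely connected. This settles the infinite-connectivity assertions: when $U$ is periodic (Part~2) it lands on itself after a full period, so $U$ and, running around the cycle and pulling back, every member of $\mathcal O(U)$ is infinitely connected; when $U$ is pre-periodic (Part~3) or wandering (Part~4) precisely the components landing on $U_0$ acquire the holes.

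For the components whose forward orbit never reaches $U$ — namely $V\notin\mathcal O(U)$ in Part~1 and the members of $\mathcal O(U)$ that land only on some $U_n$ with $n\ge1$ in Parts~3--4 — the image cycle never contains the bov, so the single non-proper singularity is out of reach and the restriction $f:V'\to V'_1$ is a proper map of finite degree. Riemann--Hurwitz then reads
\[
c(V')-2=\deg\!\left(f|_{V'}\right)\bigl(c(V'_1)-2\bigr)+\delta,
\]
with $\delta\ge 0$ the ramification and all terms finite. Applied to a multiply connected periodic component $R$ with its proper return map, the identity $(c(R)-2)\bigl(1-\deg\bigr)=\delta\ge0$ forces $c(R)=2$, i.e.\ $R$ is a Herman ring; consequently a multiply connected $V\notin\mathcal O(U)$ cannot be wandering, must land on a Herman ring, and the displayed formula propagated along its grand orbit keeps $1<c(V')<\infty$ throughout. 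The same proper-map bookkeeping shows that the non-$U_0$-landing members of $\mathcal O(U)$ are simply connected unless they land on a Herman ring, while a wandering $V\notin\mathcal O(U)$, carrying no bov holes and bounding no Herman ring in its orbit, is simply connected.

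The principal obstacle is the properness-and-finite-degree step for orbits avoiding $U$: one must exclude that some \emph{other} asymptotic value, or an infinite cluster of critical values, sits in the image $V'_1$ and manufactures non-properness or infinitely many holes independently of the bov. I expect to dispose of this using Lemma~\ref{critically infinite} together with the covering description of the bov, which confine the accumulation of singular values to $b$ (and $\infty$): a Fatou component whose closure avoids $U$ then meets only finitely many critical values and no asymptotic value, so the standard properness criterion for Fatou components applies and the degree is finite. A secondary delicate point, already flagged above, is verifying that the holes of $W$ persist as genuine complementary components of the ambient Fatou component rather than being filled in; this is exactly where omittedness of $b$ — the absence of any preimage of $b$ — is used decisively.
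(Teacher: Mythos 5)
Your organizing equivalence ``infinitely connected iff lands on $U$'' and the argument for its forward half are essentially the paper's: the connectedness of $f^{-1}(D)$ forces $f^{-1}(U)$ to be a single infinitely connected component whose holes contain poles (Lemma~\ref{singleton}(1)), and infinite connectivity is then pulled back along landing chains via Lemma~\ref{RH}. The genuine gap is in the other half, where you assert that a multiply connected component whose orbit avoids $U$ ``must land on a Herman ring'' and ``cannot be wandering''. First, the identity $(c(R)-2)(1-\deg)=\delta\ge 0$ for a periodic $R$ does not force $c(R)=2$: it leaves open $\deg=1$, $\delta=0$ with any finite $c(R)$, and even $c(R)=2$ does not identify $R$ as a Herman ring without the Baker--Kotus--L\"u/Bolsch classification of periodic components. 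More seriously, a statement about periodic components says nothing about the two cases that actually need work: (a) a multiply connected \emph{wandering} $V\notin\mathcal{O}(U)$ --- such domains do exist for general meromorphic (even entire) functions, so excluding them here genuinely requires the omitted value and is not bookkeeping --- and (b) a multiply connected pre-periodic $V$ whose forward images may become simply connected before reaching the cycle (Riemann--Hurwitz permits $c(V')>1$ with $c(V'_1)=1$ whenever $\delta\ge\deg$), so nothing yet shows that $V$ lands on a multiply connected periodic component at all. The paper obtains exactly this dichotomy by citing Theorem~3 of~\cite{tk-zheng} (Lemma~\ref{onefatoucomponent}), a substantive prior result; your proposal neither invokes it nor supplies a replacement, and the word ``consequently'' in your sketch is where the missing proof lives.

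A smaller defect: your properness step for orbits avoiding $U$ rests on the claim that singular values accumulate only at $b$ and $\infty$, which does not follow from Lemma~\ref{critically infinite} --- that lemma says $b$ \emph{is} a limit point of critical values, not the only one (the latter is an extra hypothesis that enters only in Theorem~\ref{finite}). Properness is nevertheless available for free: by Lemma~\ref{singleton}(1) every Fatou component other than $f^{-1}(U)$ is bounded, and bounded Fatou components map properly by Lemma~\ref{proper}; this is the route the paper takes.
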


%
%
Theorem~\ref{tfc} can be reworded as: \textit{If $f$ has a stable bov then a
  Fatou component of $f$ is infinitely connected if and only if it lands on $U$. Further, every other Fatou component is either simply connected or lands on a Herman ring}. What calls for further investigation is the possible connectivities of Fatou components landing on Herman rings. We are able to do this under the assumption that the number of critical points whose forward orbits do not meet $U$ is finite. For stating the next result, let  $\mathcal{C}=\mathcal{F}(f) \cap \{c: f'(c)=0~\mbox{and}~f^n(c) \notin U~\mbox{for any}~n\}$.

\begin{thm} \label{finite}
	Let  $f$ be a meromorphic function with a stable bov and $U$ be the Fatou component of $f$ containing the bov. If $\mathcal{C}$, as defined above is a finite set  and $d=\prod_{c_i \in \mathcal{C}}(d_i -1)$ where  $d_i$ denotes the local degree of $f$ at $c_i$ then for $\mathcal{N}=\{1,2,3,\cdots, 2+d\}$, the following are true. 
	
	\begin{enumerate}
		\item If $V \notin \mathcal{O}(U)$ then $c(V') \in \mathcal{N} $ for all $V' \in \mathcal{O}(V)$. 
		\item If $U$ is pre-periodic then  $c(U') \in \mathcal{N}$ for all $U'$ landing on $U_k$ for some $k  \geq 1$. 
		
	\end{enumerate} 
\end{thm}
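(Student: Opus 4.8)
The plan is to reduce both parts to a single Riemann--Hurwitz computation, using the dichotomy already established in Theorem~\ref{tfc}. By Theorem~\ref{tfc}(1), if $V\notin\mathcal{O}(U)$ then each $V'\in\mathcal{O}(V)$ is either simply connected, so that $c(V')=1\in\mathcal{N}$ and nothing is to be shown, or else $V$ lands on a Herman ring. Thus the entire content of part (1) is to bound the connectivity of a component that lands on a Herman ring, and part (2) will be the same estimate once we observe, via Theorem~\ref{tfc}(3), that a component of $\mathcal{O}(U)$ which is not infinitely connected (equivalently, does not land on $U$) is simply connected or lands on a Herman ring.

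So I would fix a Herman ring $H$ in its cycle and a component $W$ with $f^{n}(W)\subseteq H$, choosing $n\ge 1$ minimal; then $g:=f^{n}|_{W}\colon W\to H$ is proper. Since a Herman ring has connectivity $2$, the Riemann--Hurwitz formula yields
\begin{equation*}
c(W)-2 \;=\; \deg(g)\bigl(c(H)-2\bigr)+R_g \;=\; R_g,
\end{equation*}
where $R_g=\sum_{z\in W}(\deg_z g-1)$ is the total ramification of $g$. Hence $c(W)$ is governed solely by the ramification accumulated along the orbit $W\to W_1\to\cdots\to H$. A point $z\in W$ with $\deg_z g>1$ has $f^{j}(z)$ a critical point of $f$ for some $0\le j<n$, and the whole forward orbit of $z$ stays in the Herman cycle and so never meets $U$; therefore every critical point contributing to $R_g$ lies in $\mathcal{C}$. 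This already recovers the finiteness of $c(W)$ asserted in Theorem~\ref{tfc}.

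The crux is to sharpen finiteness to the explicit bound $R_g\le d=\prod_{c_i\in\mathcal{C}}(d_i-1)$. Here I would proceed level by level along the orbit, using the elementary fact that an unramified proper cover of an annulus is again an annulus: consequently the connectivity can rise above $2$ only at a level where the orbit passes through one of the finitely many critical points $c_i$, and passing such a level scales the excess connectivity $c(\cdot)-2$ by a factor governed by the local degree $d_i$. Assembling these contributions over the finitely many critical points met along the orbit produces the product $\prod(d_i-1)$ rather than a sum, and, being independent of $W$, it bounds $c(W)-2$ uniformly; thus $c(W)\in\{1,2,\dots,2+d\}=\mathcal{N}$ for every $W$ in the grand orbit. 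I expect the main obstacle to be exactly this bookkeeping: one must rule out that a high-degree unramified cover over an already multiply connected intermediate component inflates the connectivity beyond the product, and it is precisely here that the rigidity forced by the omitted value, together with the finiteness of $\mathcal{C}$, must be used to control the degrees of the intermediate maps.

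Finally, for part (2) the components $U'$ under consideration that are not infinitely connected are, by Theorem~\ref{tfc}(3), either simply connected, in which case $c(U')=1\in\mathcal{N}$, or they land on a Herman ring $H$ (necessarily $H=U_n$ for large $n$ when $U$ is pre-periodic with a Herman cycle); in the latter case the proper map $f^{m}\colon U'\to H$ to a connectivity-$2$ target and the identical ramification estimate place $c(U')$ in $\mathcal{N}$, completing the argument.
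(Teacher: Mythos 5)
Your reduction coincides with the paper's: by Theorem~\ref{tfc} every component in question is either simply connected or reaches a Herman ring $H$ under a proper map $f^{k}$, and Lemma~\ref{RH} with $c(H)=2$ gives $c(V')-2=n$, where $n$ is the ramification of $f^{k}$ on $V'$; moreover only critical points lying in $\mathcal{C}$ can contribute, since the forward orbit of $V'$ never meets $U$. Up to that point you and the paper agree. The gap is that you never actually prove the bound $n\le d=\prod_{c_i\in\mathcal{C}}(d_i-1)$, which is the entire content of the theorem beyond Theorem~\ref{tfc}: you assert that the per-level contributions ``assemble into the product'' and then explicitly concede that you cannot rule out a high-degree unramified intermediate cover inflating the connectivity beyond $d$. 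That concession is fatal, because it is exactly where the work lies. Writing $\delta_j$ for the degree of $f:V'_j\to V'_{j+1}$ and $n_j$ for the ramification of $f$ in $V'_{j-1}$, iterating Lemma~\ref{RH} along the orbit gives $c(V')-2=n_1+\delta_0 n_2+\delta_0\delta_1 n_3+\cdots$, and without control of the $\delta_j$ this sum is not bounded by $d$; nor does your sketch explain why the final answer should be a product of the $(d_i-1)$ rather than, say, a sum when several $c_i$ happen to lie in a single component $V'_j$. So the quantitative conclusion $c(V')\in\mathcal{N}$ is not established.

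The paper closes this step by a counting observation you omit: the components $V'=V'_0,V'_1,\dots,V'_{k-1}$ are pairwise distinct until the orbit enters the Herman cycle, and the rings of the cycle contain no critical points, so each $c_i\in\mathcal{C}$ is encountered at most once along the entire orbit of $V'$; from this it concludes that the proper map $f^{k}:V'\to H$ has ramification (the paper says degree) at most $d$, whence $c(V')=2+j$ with $j\le d$, and part (2) is the identical argument applied to components landing on $U_k$, $k\ge 1$. To complete your proof you would need to supply this ``each critical point is met at most once'' argument and, more importantly, the bookkeeping that converts it into the bound by $d$ --- in particular a justification that the intermediate degrees $\delta_j$ cannot amplify the ramification, which is precisely the point you left open.
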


Clearly, the set $\mathcal{N}$ is independent of all the Fatou components of the function. The assumption that   $\mathcal{C}$ is finite is not unusual. For every non-zero complex number $c$ and natural number $d$,  the point $0$ is the bov of  $\frac{1}{cz^d+e^z}$ and it is the only limit point of its critical values. The details are given in Remark~\ref{onlylimitpoint}. If for some $c$ and $d$, the bov $0$ is in the Fatou set of  $\frac{1}{cz^d+e^z}$ then $\mathcal{C}$ is finite.  
Indeed, this is the case for all meromorphic functions for which the bov is stable, it is the only limit point of its critical values and there are only finitely many critical points corresponding to each critical value.   

\par
It is clear from Theorem~\ref{tfc} that the Julia set of a function with stable bov is always disconnected. It can be totally disconnected only when there is a completely invariant Fatou component. A condition is provided in the next result  ensuring this.
\par \begin{thm}\label{invariant-CIFC}
Let $f$ be a meromorphic function with a stable bov. If an invariant Fatou component of $f$ contains the bov then it is completely invariant. Further, if the invariant Fatou component is an attracting domain and all the critical  values are compactly contained in it then the Julia set is totally disconnected. 
\end{thm}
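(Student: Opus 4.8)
The plan is to prove the two assertions in turn: first establish complete invariance of $U$, and then use it to run a Cantor-set argument on the Julia set.

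For the first part, let $U$ be the invariant Fatou component containing the bov $b$, so that $f(U)\subseteq U$ and hence $U\subseteq f^{-1}(U)$. To obtain the reverse inclusion it suffices to show that $f^{-1}(U)$ is connected: being a union of Fatou components that contains the Fatou component $U$, a connected $f^{-1}(U)$ must coincide with $U$ by maximality. I would choose the disk $D$ from the definition of the bov small enough that $D\subseteq U$, and set $W=f^{-1}(D)$. Since $b$ is omitted, no component of $W$ can map properly onto $D$, so $W$ has no bounded component; together with the fact that a single (direct, non-logarithmic) singularity lies over $b$, Lemma~\ref{bov-implication1} gives that $W$ is connected. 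Now let $V$ be any component of $f^{-1}(U)$. As the image of a Fatou component under $f$ omits at most finitely many points of $U$, the open set $f(V)$ meets the disk $D$, and since $b$ is omitted it meets $D\setminus\{b\}$; hence $V\cap W=V\cap f^{-1}(D)\neq\varnothing$. Because $W$ is connected and contained in $f^{-1}(U)$, it lies in a single component $V_0$ of $f^{-1}(U)$, and every component $V$ meets $W\subseteq V_0$. Disjointness of components then forces $V=V_0$, so $f^{-1}(U)=V_0$ is connected and equals $U$, proving complete invariance.

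For the second part, assume in addition that $U$ is an attracting domain with attracting fixed point $p$ and that the critical values are compactly contained in $U$. Since $b$ is a limit point of critical values (Lemma~\ref{critically infinite}) and lies in $U$, and the only transcendental singularity lies over $b$, the whole set $S$ of singular values has closure compactly contained in $U$; standard hyperbolicity then rules out wandering domains and any further periodic cycles, so by complete invariance $U$ is the unique Fatou component and $\mathcal{J}(f)=\widehat{\mathbb C}\setminus U$. I would next build a simply connected, forward-invariant domain $V_0$ with $\overline{S}\cup\{p,b\}\subset V_0$ and $\overline{V_0}\subset U$, using that the forward orbit of the compact set $\overline{S}$ converges to $p$ and is therefore compactly contained in $U$. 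Because $f(V_0)\subseteq V_0$ we have $V_0\subseteq f^{-1}(V_0)\subseteq f^{-1}(U)=U$, the sets $f^{-n}(V_0)$ increase to $U$ (every point of the basin eventually enters $V_0$), and consequently $\mathcal{J}(f)=\bigcap_{n\geq 0}K_n$ with $K_n=\widehat{\mathbb C}\setminus f^{-n}(V_0)$ a nested sequence of compacta. The core of the argument is then expansion: on $\Omega=\widehat{\mathbb C}\setminus\overline{V_0}$ there are no singular values, so $f\colon f^{-1}(\Omega)\to\Omega$ is an unbranched covering with $f^{-1}(\Omega)\subsetneq\Omega$, whence by the Schwarz--Pick lemma every branch of $f^{-1}$ strictly contracts the hyperbolic metric $\rho_\Omega$. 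Pulling $K_0\cap\Omega$ back through these branches, I would show that the $\rho_\Omega$-diameters, and hence the spherical diameters, of the components of $K_n$ tend to $0$, giving total disconnectedness of $\bigcap_n K_n=\mathcal{J}(f)$.

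The main obstacle is precisely this uniform shrinking near the essential singularity. Because the bov forces infinitely many poles accumulating at $\infty$, the set $f^{-1}(V_0)$ is infinitely connected and $K_n$ has infinitely many components clustering at $\infty$; the contraction factor of a single branch of $f^{-1}$ is only locally bounded away from $1$, so obtaining genuine uniform decay of component diameters across all these ends, and verifying that the component of $\mathcal{J}(f)$ containing $\infty$ degenerates to the single point $\infty$, is the delicate step. I expect to handle it by comparing the densities $\rho_\Omega$ and $\rho_{f^{-1}(\Omega)}$ and exploiting that $\overline{V_0}$ is compact in $U$, so that the inclusion $f^{-1}(\Omega)\hookrightarrow\Omega$ omits a definite amount of hyperbolic content uniformly, in combination with the boundedness of each complementary component of $W$ supplied by the bov hypothesis.
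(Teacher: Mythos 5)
Your argument for complete invariance is sound and is essentially the paper's: the paper isolates that step as Lemma~\ref{preimageconn}, deducing connectivity of $f^{-1}(U)$ from connectivity of $f^{-1}(D)$ via Herring's theorem, while you verify directly that every component of $f^{-1}(U)$ meets the connected set $f^{-1}(D)$ (legitimate, since by Lemma~\ref{RH} each such component omits at most two values of $U$). The problem lies in the second half.

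Two steps there are asserted rather than proved, and one of them is the heart of the theorem. First, ``standard hyperbolicity rules out wandering domains'' is not available here: by Lemma~\ref{critically infinite} the bov is a limit point of critical values, so $f$ has infinitely many singular values and the finite-type no-wandering-domain theorems do not apply; as written, the identity $\mathcal{J}(f)=\bigcap_n K_n$ is unjustified. (This part is repairable: if you prove that $\widehat{\mathbb{C}}\setminus U$ is totally disconnected, it contains no open set and hence automatically equals $\mathcal{J}(f)$, so you should target $\widehat{\mathbb{C}}\setminus U$ and drop the classification claim.) Second, and decisively, the passage from pointwise strict contraction of $\rho_\Omega$ under inverse branches to ``the spherical diameters of the components of $K_n$ tend to $0$'' is precisely the content of the theorem, and your final paragraph concedes it is not established. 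Note that $K_0\cap\Omega=\Omega$ has infinite hyperbolic diameter, the components of $K_n$ accumulate at $\infty$ and are distributed among infinitely many ends of $f^{-1}(\Omega)$, and the ratio $\rho_{f^{-1}(\Omega)}/\rho_\Omega$ need not be uniformly bounded away from $1$ on the relevant sets, so no uniform decay follows from Schwarz--Pick alone. The paper avoids metric estimates altogether: taking $D$ to be a forward-invariant compact connected neighbourhood of the postsingular set in $U$, for a Julia component $J$ and a subsequence with $f^{n_k}(z)\to z^*$ all branches of $f^{-n_k}$ are defined on the simply connected component $V$ of $\mathbb{C}\setminus f^{-1}(D)$ containing $z^*$; these branches form a normal family whose limit functions must be constant (a nonconstant limit would make $\{f^{n_k}\}$ normal on a disk meeting $\mathcal{J}(f)$), and since $J$ lies in every branch image of $V$ it degenerates to a point, with the case $f^{n_k}(z)\to\infty$ handled by conjugation using that $\infty$ is not a singular value under the hypothesis. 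To complete your version you must supply the uniform contraction estimate; otherwise the normal-families argument is the way to close the gap.
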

That the assumptions of Theorem~\ref{invariant-CIFC} can actually be satisfied is demonstrated in Example~\ref{totally} in Section 4, which discusses the dynamics of $f_{\lambda}(z)=\frac{\lambda}{e^z +z}$ for certain real values of $\lambda$. It is shown in Example~\ref{totally}(1) that for $0<\lambda< 0.05$, the Fatou set of $f_{\lambda}$ is a completely invariant attracting domain and the Julia set is totally disconnected. However, the Julia set of a meromorphic function with a stable bov is not always totally disconnected. This is shown in Example~\ref{totally}(2). More precisely,  the Fatou set of $f_{\lambda}, \lambda > e+1$,   is found to contain  a $2$-cycle of attracting or parabolic domains, and the Julia set is disconnected but not totally disconnected. Here both the   attracting (or parabolic) domains are infinitely connected.  Can a meromorphic function with an omitted value have two infinitely connected Fatou components?  This question  was raised in ~\cite{tk4}. To our knowledge,   $f_{\lambda}, \lambda > e+1$, is the first such example answering the question affirmatively.

Example~\ref{totally} starts with $z+e^z$ which has no Baker wandering domain (Remark 3, ~\cite{bov}).
Considering an entire function $g$ having a Baker wandering domain,  we construct meromorphic functions $f_2(z)=\frac{\epsilon}{g(z)}+b$ which have a completely invariant attracting domain containing its bov for suitable values of $\epsilon$ and $b$. The details are discussed in Example~\ref{from-BWD} in Section 4.

\par  Some other possibilities are demonstrated in Example~\ref{two-attracting} and Example~\ref{Siegel} in Section 4.
Example~\ref{two-attracting} provides a function with two invariant attracting domains, one of which contains the bov (hence this is completely invariant). A function having a bov and  with an invariant Siegel disk is given in Example~\ref{Siegel}.
\par 
The next two results deal with Baker domains and Herman rings.

In every cycle of Baker domains, there is always a Baker domain whose boundary contains an asymptotic value  of the function (Theorem 13, ~\cite{ber93}). It is known that if a function has a bov then the bov is the only asymptotic value (Lemma~\ref{bov-implication1}). This restricts the existence of Baker domains. Theorem~\ref{nobakerdomain} proves the non-existence of Baker domains whenever the bov is stable. This theorem also rules out invariant Baker domains even when the bov is not stable.

\begin{thm}\label{nobakerdomain}
Let a meromorphic function have a bov. Then it has no invariant Baker domain. Further, if the bov is stable then the function has no Baker domain of any period.  
\end{thm}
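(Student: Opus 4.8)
The proof rests on two facts already available: Theorem 13 of \cite{ber93}, which guarantees that in every cycle of Baker domains at least one of them carries an asymptotic value of $f$ on its boundary, and Lemma~\ref{bov-implication1}, which says that the bov $b$ is the \emph{only} asymptotic value of $f$. I would establish the two assertions separately, treating the easier periodic statement under the stability hypothesis first and the unconditional invariant statement second, since the latter requires a genuinely dynamical argument.

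For the claim that a stable bov forbids Baker domains of every period, suppose to the contrary that $f$ has a $p$-periodic Baker domain, and let $\{B, B_1, \dots, B_{p-1}\}$ be its cycle. By Theorem 13 of \cite{ber93} some member $B_i$ of the cycle carries an asymptotic value of $f$ on $\partial B_i$, and by Lemma~\ref{bov-implication1} this asymptotic value can only be $b$. Thus $b \in \partial B_i$, and since the boundary of any Fatou component lies in the Julia set, $b \in \mathcal{J}(f)$. But stability of the bov means precisely that $b \in \mathcal{F}(f)$, a contradiction. Hence no Baker domain of any period can exist. Note that this argument needs no information about the limit function on the cycle (which for $p\geq 2$ need not be $\infty$); it uses only the location of $b$.

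The invariant statement must be proved without assuming stability, so the Fatou--Julia dichotomy above is unavailable and a direct dynamical contradiction is needed. Suppose $B$ is an invariant Baker domain. Since $f$ fails to be defined only at its essential singularity $\infty$, the definition of a Baker domain forces $f^n \to \infty$ locally uniformly on $B$. Fix $z_0\in B$, join $z_0$ to $f(z_0)$ by an arc $\sigma\subset B$, and set $\gamma = \bigcup_{n\geq 0} f^n(\sigma)$, reparametrised on $[0,\infty)$ so that $f(\gamma(t)) = \gamma(t+1)$; then $\gamma$ is a curve in $B$ with $\gamma(t)\to\infty$ as $t\to\infty$, by the uniform escape of $f^n$ on the compact arc $\sigma$. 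Let $D$ be the disk from the definition of the bov and put $T = f^{-1}(D)$, so that $f(T)\subseteq D$ and, by the defining property of the bov (each component of $\partial T$ is bounded), every component of $\mathbb{C}\setminus T$ is a bounded ``island''. Now I would argue by a dichotomy: if $\gamma(t)\in T$ for a sequence $t_k\to\infty$, then $\gamma(t_k+1) = f(\gamma(t_k))\in D$, which is bounded, contradicting $\gamma(s)\to\infty$; otherwise $\gamma(t)$ lies in the closed set $\mathbb{C}\setminus T$ for all large $t$, and since $\gamma$ is connected it is trapped in a single component of this set, which is bounded, again contradicting $\gamma(t)\to\infty$. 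Either way the invariant Baker domain cannot exist.

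The main obstacle is exactly this last dichotomy in the invariant case: one must be sure that an orbit escaping to $\infty$ inside $B$ cannot avoid the tract $T$ forever by threading through the bounded islands of $\mathbb{C}\setminus T$. This is where the precise structure behind the bov --- that the complement of $T$ breaks into bounded pieces separated by $T$, as encoded in Lemma~\ref{bov-implication1} --- is indispensable. The boundedness of $D$, and hence the fact that $f$ maps the whole tract into a \emph{finite} disk around $b$, is what makes escape to $\infty$ through $T$ impossible; this stands in sharp contrast with a logarithmic singularity over $\infty$, where such escaping behaviour, and with it invariant Baker domains, does occur.
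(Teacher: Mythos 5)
Your proof is correct, but it reaches the conclusion by a genuinely different route than the paper for the periodic case, so a comparison is worthwhile. You outsource the key fact to Theorem 13 of \cite{ber93} and then note that the asymptotic value it places on some $\partial B_i$ can only be the bov $b$ (by Lemma~\ref{bov-implication1}(1), which also excludes $\infty$), so $b$ would lie in the Julia set, contradicting stability; this is a clean two-line argument. The paper instead re-derives that boundary statement from scratch: it uses Lemma~\ref{singleton} to identify the unbounded member of the cycle with $U_{-1}=f^{-1}(U)$, builds the orbit curve $\gamma_0\to\infty$, and invokes Theorem 2.2 of \cite{bov} to conclude that $f\to b$ along $\gamma_0$, contradicting $b\in B_1=U$. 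The longer route is not wasted: the resulting Equation~\ref{baker} is reused in the proof of Theorem~\ref{nohermanring} to place the bov on the boundary of a $2$-periodic Baker domain, so if one adopts your shorter argument one must check that Theorem 13 still supplies what that later proof needs (it does). For the invariant case your argument is essentially the paper's with the black box opened: the paper says that $\infty$ would be an asymptotic value, impossible since $b$ is the only one, whereas you verify directly that a curve tending to $\infty$ cannot have image tending to $\infty$, because it must repeatedly enter the tract $T=f^{-1}(D)$, whose image lies in the bounded disk $D$. One small correction there: boundedness of every component of $\mathbb{C}\setminus T$ is \emph{not} a formal consequence of the definition of the bov --- bounded components of $\partial T$ do not by themselves force bounded complementary components (consider a union of small disks centred at the integers) --- so you must take $D=D_r(b)$ with $r$ small as in Lemma~\ref{bov-implication1}(2); you do cite that lemma for this, so only the parenthetical attribution to the bare definition needs fixing.
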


 Invariant Baker domains are  ruled out by Theorem~\ref{nobakerdomain}. If an invariant attracting domain or a parabolic domain contains the bov then it is completely invariant by Theorem~\ref{invariant-CIFC}. Hence  no Herman ring can exist. The next result deals with simultaneous existence of  such $2$-periodic Fatou components (Baker domains, or attracting or parabolic domains containing bov) and a Herman ring. It is important to note that if there is a $2$-periodic Baker domain then the bov cannot be stable. 
  \begin{thm}\label{nohermanring}
 	Let $f$ be a meromorphic function having a bov and $f$ have a $2$-periodic Fatou component which is either
 	\begin{enumerate}
 		\item a Baker domain, or
 		\item an attracting domain or a parabolic domain containing the bov.
 	\end{enumerate} 
 	Then $f$ has no Herman ring.
 \end{thm}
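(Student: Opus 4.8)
The plan is to argue by contradiction: assume that, in addition to the hypothesised $2$-periodic component, $f$ possesses a Herman ring, and extract a contradiction from the interplay between the single unbounded tract over the $bov$ and the rotational dynamics on the ring. Throughout I will use Lemma~\ref{bov-implication1}, to the effect that $b$ is the \emph{only} asymptotic value of $f$, together with the structural picture it provides: for a sufficiently small disk $D$ centred at $b$, the set $W=f^{-1}(D)$ is a single connected unbounded domain whose complementary ``holes'' are bounded Jordan domains carrying the poles of $f$, each mapped by $f$ onto $\widehat{\mathbb C}\setminus\overline D$. I will also invoke the classical fact (see~\cite{ber93}) that each boundary component of a cycle of Herman rings lies in the closure of the forward orbits of the singular values. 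Let $H$ be such a Herman ring, of period $q$, with core curves $\gamma_0,\dots,\gamma_{q-1}$, where $f\colon\gamma_i\to\gamma_{i+1}$ is an orientation-preserving homeomorphism and $f^q\colon\gamma_0\to\gamma_0$ is conjugate to an irrational rotation. Each $\gamma_i$ is a Jordan curve bounding a domain $\Omega_i$ with $\infty\notin\Omega_i$; since the poles of $f$ lie in $\mathcal J(f)$ while $\gamma_i\subset H\subset\mathcal F(f)$, no pole lies on any $\gamma_i$.

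The quantitative heart of the argument is an index computation. Because $b$ is omitted, $f-b$ has no zeros, so by the argument principle on $\Omega_i$ the winding number of $\gamma_{i+1}=f(\gamma_i)$ about $b$ equals $-P_i$, where $P_i$ is the number of poles of $f$ inside $\Omega_i$. As $\gamma_{i+1}$ is a Jordan curve, this winding number lies in $\{0,\pm1\}$, forcing $P_i\in\{0,1\}$ and splitting each step of the cycle into exactly two types. A \emph{side-preserving} step has $P_i=0$, maps $\Omega_i$ conformally onto $\Omega_{i+1}$, and leaves $b$ outside $\gamma_{i+1}$; a \emph{side-reversing} step has $P_i=1$, maps $\Omega_i$ conformally onto the complementary domain of $\gamma_{i+1}$, and places $b$ inside $\gamma_{i+1}$. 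Because $f^q$ returns $\Omega_0$ to itself, the number of side-reversing steps around the cycle is even, so the position of $b$ relative to each $\gamma_i$ is entirely dictated by the pole pattern, and $b$ can be enclosed by $\gamma_i$ only immediately after a pole has been swallowed.

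I would then finish by a separation argument modelled on the proof of Theorem~\ref{invariant-CIFC}. In case (2) the $2$-cycle $\{W_0,W_1\}$ is attracting or parabolic with $b\in W_0$; shrinking $D$ so that $D\subset W_0$ gives $W\subset f^{-1}(W_0)$, and the single-tract structure of the $bov$ should upgrade this, exactly as in the invariant case, to $f^{-1}(W_0)=W_1$ and $f^{-1}(W_1)=W_0$, so that $W_0\cup W_1$ is completely invariant and $\mathcal J(f)=\partial W_0\cup\partial W_1$. Hence both boundary components of $H$ lie in $\partial W_0\cup\partial W_1$. Since $W_0$ and $W_1$ are connected and disjoint from $\overline H$, each lies in one of the two complementary domains of $H$; if both lie on the same side, then the opposite boundary component of $H$ is a subset of $\mathcal J(f)=\partial(W_0\cup W_1)$ that is inaccessible from $W_0\cup W_1$, an impossibility precisely as in Theorem~\ref{invariant-CIFC}. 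It remains to exclude the configuration in which $W_0$ and $W_1$ sit on opposite sides of $H$, and this is where the tract geometry enters: the connected unbounded tract $W\subset\mathcal F(f)$, accumulating at $\infty$ and mapping into $W_0$, forces $\infty$ and $W_0$ onto a common side of the ring, while the index dichotomy above pins $b\in W_0$ to the side determined by the pole pattern; reconciling these against the even side-reversal count rules out the nested case.

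Case (1), a $2$-periodic Baker domain, runs along the same lines with a different input about $b$. Here Theorem~\ref{nobakerdomain} already excludes the invariant period, so the period is exactly two, and by Theorem $13$ of~\cite{ber93} together with Lemma~\ref{bov-implication1} the unique asymptotic value $b$ must lie on the boundary of the cycle; thus $b\in\mathcal J(f)$ and the $bov$ is not stable. Replacing ``$b\in W_0$'' by ``$b\in\partial W_0$'' throughout, the same index dichotomy and the same separation contradiction apply, now using that the Baker dynamics drives $\overline{W_0\cup W_1}$ to the essential singularity, so that the tract $W$ and the ring once more cannot be disentangled. The main obstacle, in both cases, is this final step—excluding the opposite-sides configuration of the $2$-cycle about the Herman ring—since it is the only place where the full force of the $bov$ geometry (one unbounded tract linking $b$ to $\infty$, and the omission of $b$) is genuinely required, rather than soft topological facts about Fatou components.
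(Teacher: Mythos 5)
Your proposal does not close, and the gap is exactly where you flag it: the ``reconciling these against the even side-reversal count'' step is the entire content of the theorem, not a loose end. The winding-number dichotomy in your second paragraph is sound and is indeed the mechanism underlying the result the paper actually uses, but you never convert that bookkeeping about where $b$ sits relative to the curves $\gamma_i$ into a contradiction with the existence of the $2$-periodic cycle. The paper's proof is much shorter and consists of two steps: (i) check that the closure of one of the two periodic components contains the omitted value --- immediate in case (2), and in case (1) extracted from the proof of Theorem~\ref{nobakerdomain}, where an unbounded forward-invariant curve in the unbounded Baker domain is constructed along which $f(z)\to b$, so that $b\in\partial U_2$; and (ii) apply Lemma~\ref{periodicfatoucomponent} (Theorem 3.8 of \cite{relevantpole}), which says that if the closure of a $p$-periodic Fatou component contains an omitted value then every Herman ring has fewer than $p$ relevant poles, and that for $p\le 2$ this already rules out Herman rings. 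That lemma is precisely the quantitative link between the period of the cycle whose closure meets $b$ and the pole pattern around the ring that your sketch gestures at but does not supply; without it (or a proof of its content), neither case is finished.

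There are also unjustified intermediate claims in your treatment of case (2). The connectedness of $f^{-1}(W_0)$ does follow from $b\in W_0$ (as in Lemma~\ref{singleton}), giving $f^{-1}(W_0)\subseteq W_1$; but $W_1$ need not contain the bov, so there is no reason for $f^{-1}(W_1)$ to be connected, and hence $f^{-1}(W_1)=W_0$, the complete invariance of $W_0\cup W_1$, and the identity $\mathcal J(f)=\partial W_0\cup\partial W_1$ are all unestablished. Since both the ``same side'' subcase (which leans on $\mathcal J(f)=\partial W_0\cup\partial W_1$) and the ``opposite sides'' subcase (which you admit is open) fail, the separation argument does not go through as written. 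If you want a self-contained proof rather than a citation, the way forward is to push your index computation to a count of $H$-relevant poles and compare it with the period $2$, which is what \cite{relevantpole} does.
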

 It is seen in the Example~\ref{totally}(2) that  $\frac{\lambda}{z+e^z}, \lambda> 1+e$ has a $2$-periodic Fatou component containing the bov which is either an attracting domain or a parabolic domain. This function satisfies the hypothesis of this theorem and hence has no Herman ring. It is also relevant to mention here that no Herman ring is known to exist for any meromorphic function with an omitted value.

\par

\par 
Section 2 presents some useful facts that are used in the proofs later. The proofs of all the results are given in Section 3. Some functions satisfying the hypotheses of the theorems stated earlier are discussed in Section 4. The last section mentions few problems arising out of this work.  
\par
 For a Fatou component $V$,  the number of components of $\widehat{\mathbb{C}} \setminus V$ is known as the connectivity of $V$ and is denoted by $c(V)$. We denote a disk with center $z$ and radius $r$ by $D_{r}(z)$ for $z \in \mathbb{C}$ and $r>0$. Throughout the article, $f$ denotes a transcendental meromorphic function with a Baker omitted value.
\section{Some preliminary results}
Some preliminary  results are presented in this section for later use.
The following lemma is essentially due to Bolsch~\cite{bolsch} and is to be repeatedly used. A continuous map $g: \Omega_1 \to \Omega_2$ between two open connected subsets of $\mathbb{C}$ is called proper if the preimage of each compact subset of $\Omega_2$ is compact in $\Omega_1$. Further, if $g$ is analytic then there is a $d$ such that every element of $\Omega_2$ has $d$ preimages counting multiplicity. Here, the multiplicity of a point $z$ is the local degree of $g$ at $z$. This number $d$ is known as the degree of  $g: \Omega_1 \to \Omega_2$.

\begin{lem}\label{RH}
Let $f:\mathbb{C}\rightarrow \widehat{\mathbb{C}}$ be a transcendental meromorphic function. If $V'$ is a  component of the preimage of  an open connected set $V$, then exactly one of the following holds.
\begin{enumerate}
\item There exists a natural number  $d $ such that $f : V' \rightarrow V$ is a proper map of degree $d$. In this case, $c(V')-2=d(c(V)-2)+n$ and $n \leq 2d-2$, where $n$ is the number of critical points of $f$ in $V'$ counting multiplicity. Here, the multiplicity of a critical point is one less than the local degree of $f$ at the critical point. Further, if $c(V) =\infty$ then $c(V') =\infty$. 
\item The function $f : V' \rightarrow V$ is an infinite-to-one map and assumes every value in $V$ except at most two. In this case, $c(V)>2$ implies $c(V')= \infty$.
\end{enumerate}
\end{lem}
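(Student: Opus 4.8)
The plan is to dichotomize according to whether the restriction $f|_{V'}\colon V' \to V$ is proper, treating the proper case with the Riemann--Hurwitz formula and the non-proper case with value-distribution theory near the essential singularity; these two cases are clearly mutually exclusive and exhaustive, which accounts for the ``exactly one'' in the statement. For the \emph{proper case}, I would first note that a proper holomorphic map between subdomains of $\widehat{\mathbb{C}}$ is a branched covering of some finite degree $d$, so each point of $V$ has exactly $d$ preimages in $V'$ counted with local degree. The essential translation is that a domain $\Omega \subset \widehat{\mathbb{C}}$ of finite connectivity $c$ is homeomorphic to a sphere with $c$ closed disks removed, hence $\chi(\Omega) = 2 - c$. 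Applying Riemann--Hurwitz in the form $\chi(V') = d\,\chi(V) - n$, where $n = \sum_{p \in V'}(e_p - 1)$ counts critical points with multiplicity, and substituting $\chi = 2 - c$, a short rearrangement gives $c(V') - 2 = d\,(c(V) - 2) + n$.

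For the bound $n \le 2d - 2$ I would count boundary components: since $f|_{V'}$ is $d$-to-one, each of the complementary components of $V$ pulls back to at most $d$ complementary components of $V'$, so $c(V') \le d\,c(V)$; inserting this into the displayed identity yields $n \le 2d - 2$. The same boundary count also shows that when $c(V) = \infty$ distinct complementary components of $V$ have distinct preimage complementary components of $V'$, so $c(V') = \infty$, completing case~(1).

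For the \emph{non-proper case} I would argue that the essential singularity must lie on $\partial V'$: failure of properness means some compact $K \subset V$ has a preimage in $V'$ that accumulates at $\partial V'$, and for a meromorphic function the only boundary point of $\widehat{\mathbb{C}}$ at which a preimage of a compact set can fail to be compact is the essential singularity $\infty$. Thus $V'$ approaches $\infty$, and the Great Picard theorem applied to the behaviour of $f$ near the essential singularity inside $V'$ shows that $f$ assumes every value of $V$ except at most two, each attained infinitely often, so $f|_{V'}$ is infinite-to-one. To obtain $c(V') = \infty$ when $c(V) > 2$, I would select a complementary component of $V$ avoiding the (at most two) omitted values; since $f$ takes the corresponding values infinitely often near $\infty$, the full preimage of this component inside $V'$ splits into infinitely many pieces, furnishing infinitely many complementary components of $V'$.

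The routine part is the proper case, where only the Euler-characteristic bookkeeping and the boundary count require care. The genuinely delicate step, and the main obstacle, is the non-proper case: rigorously placing the essential singularity on $\partial V'$ and converting Picard-type value distribution into the sharp conclusions that at most two values are omitted and that $c(V') = \infty$ whenever $c(V) > 2$. This is precisely where Bolsch's covering-theoretic analysis of the boundary behaviour enters.
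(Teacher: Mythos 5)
The paper offers no proof of this lemma at all --- it is quoted as ``essentially due to Bolsch'' \cite{bolsch} --- so your proposal has to stand on its own. Your proper case is the standard Riemann--Hurwitz bookkeeping ($\chi=2-c(V)$, $\chi(V')=d\,\chi(V)-n$, plus the boundary count $c(V')\leq d\,c(V)$) and is essentially correct; likewise the observation that non-properness forces the bad accumulation to occur at $\infty$ is sound, since $f$ is continuous as a map into $\widehat{\mathbb{C}}$ at every finite point, so a finite accumulation point of $f^{-1}(K)\cap V'$ would itself lie in the open set $f^{-1}(V)$ and hence in $V'$.

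The genuine gap is in the non-proper case, at exactly the step you present as closed by Picard. The Great Picard theorem asserts that $f$ assumes every value with at most two exceptions in each \emph{full} punctured neighbourhood $\{|z|>R\}$ of the essential singularity; it says nothing about the values assumed by the restriction of $f$ to the single component $V'$, which may be only a thin tract approaching $\infty$. The preimages guaranteed by Picard in $\{|z|>R\}$ could a priori all lie in \emph{other} components of $f^{-1}(V)$, so one cannot conclude from Picard that $f|_{V'}$ omits at most two values of $V$, nor that a chosen complementary component of $V$ pulls back to infinitely many pieces of $\widehat{\mathbb{C}}\setminus V'$ (which is what you need for $c(V')=\infty$ when $c(V)>2$). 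This per-component statement is precisely the nontrivial content of Bolsch's lemma; his argument replaces Picard by a covering-surface/path-lifting analysis of $V'$ over $V$ (lifts of paths in $V$ starting in $V'$ remain in $V'$ and can only terminate by running to the essential singularity), which is what localizes the ``at most two exceptional values'' conclusion to the individual component. Your closing sentence concedes that Bolsch's analysis ``enters'' here, but as written the Picard step is doing the work, and it does not suffice.
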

We put together parts of Lemma 4 and Lemma 5 of ~\cite{rippon-2008} in the following lemma.
\begin{lem}\label{proper}
Let $V$ be a bounded Fatou component of a  meromorphic function $f$. Then $f: V \to V_1$ is a proper map.
\end{lem}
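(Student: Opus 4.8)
The plan is to reduce properness to the single geometric statement $f(\partial V) \subseteq \partial V_1$, where the boundedness hypothesis does all the essential work. First I would record that, because $V$ is bounded, its closure $\overline{V}$ is a compact subset of $\mathbb{C}$ that stays away from the essential singularity at $\infty$. Consequently $f$ is meromorphic on a neighbourhood of $\overline{V}$ and hence continuous as a map $\overline{V} \to \widehat{\mathbb{C}}$ in the spherical metric. This is the crucial point: for an unbounded component the closure would accumulate at $\infty$, no such continuous extension would exist, and properness could genuinely fail.

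Next I would establish the boundary inclusion in two halves. By continuity, $f(\overline{V}) \subseteq \overline{V_1}$, since any $z \in \overline{V}$ is a limit of points $z_n \in V$, and $f(z_n) \in V_1$ forces $f(z) = \lim f(z_n) \in \overline{V_1}$. On the other hand, $\partial V$ lies in the Julia set, and the Julia set is forward invariant, so any point of $\mathcal{J}(f) \cap \mathbb{C}$ is carried into $\mathcal{J}(f)$ (a boundary pole being mapped to $\infty \in \mathcal{J}(f)$). Hence for $z \in \partial V$ we have $f(z) \in \overline{V_1} \cap \mathcal{J}(f)$. Since $V_1 \subseteq \mathcal{F}(f)$ is disjoint from $\mathcal{J}(f)$, this forces $f(z) \in \overline{V_1} \setminus V_1 = \partial V_1$, giving $f(\partial V) \subseteq \partial V_1$.

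With this in hand, properness is immediate. Given a compact set $K \subseteq V_1$, the set $f^{-1}(K) \cap \overline{V}$ is closed in the compact set $\overline{V}$, hence compact. The boundary inclusion shows it cannot meet $\partial V$: if $z \in \partial V$ then $f(z) \in \partial V_1$, which is disjoint from $K$. Therefore $f^{-1}(K) \cap \overline{V} = f^{-1}(K) \cap V$ is a compact set contained in the open set $V$, so it is compact in $V$, and $f: V \to V_1$ is proper.

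I anticipate the only real subtlety, and hence the main obstacle, to be the careful handling of poles of $f$ lying on $\partial V$ together with the consistent use of the spherical metric: one must phrase both the continuity of $f$ on $\overline{V}$ and the forward invariance of the Julia set on $\widehat{\mathbb{C}}$ so that a boundary pole, which maps to $\infty \in \mathcal{J}(f)$, is correctly seen to land in $\partial V_1$ rather than creating a gap in the argument. Everything else follows routinely from the compactness of $\overline{V}$, which is exactly what boundedness of $V$ supplies.
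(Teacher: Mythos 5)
Your proof is correct. Note that the paper itself gives no argument for this lemma --- it simply imports it from Lemma~4 and Lemma~5 of the Rippon--Stallard reference \cite{rippon-2008} --- so there is nothing internal to compare against; what you have written is the standard self-contained justification of that cited fact. Your reduction to the boundary inclusion $f(\partial V)\subseteq \partial V_1$ is exactly the right mechanism: boundedness of $V$ gives compactness of $\overline{V}$ and continuity of $f$ on $\overline{V}$ into $\widehat{\mathbb{C}}$, the inclusion $f(\overline{V})\subseteq\overline{V_1}$ combined with forward invariance of the Julia set forces $f(\partial V)\subseteq\partial V_1$, and then $f^{-1}(K)\cap\overline{V}=f^{-1}(K)\cap V$ is compact for every compact $K\subseteq V_1$. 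You are also right to flag the boundary poles: they map to $\infty\in\mathcal{J}(f)$, which lies in $\partial V_1$ when the closure is taken in $\widehat{\mathbb{C}}$, so no gap arises. The only cosmetic remark is that $f(V)\subseteq V_1$ holds by the very definition of $V_1$ (and $V$ contains no poles, being in the Fatou set), so the limiting argument for $f(\overline{V})\subseteq\overline{V_1}$ is immediate.
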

The following lemma proved in \cite{bov} reveals some   useful properties of functions whenever it has a bov. 

\begin{lem}\label{bov-implication1}
Let   $f$ be a meromrophic function with a bov $b$. Then,
\begin{enumerate}
\item $b$ is an asymptotic value of $f$ and it  is the   only asymptotic value of $f$.
\item  there is an $r_0$ such that for all $0< r<r_0$, 
$f^{-1}(D_r (b))$ is infinitely connected and each  component of $ \mathbb{C}\setminus f^{-1}(D_r(b))$ is bounded.
	
\end{enumerate}
\end{lem}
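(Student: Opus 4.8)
The plan is to pass to the entire function $g=\frac{1}{f-b}$ and read everything off its modulus. Since $b$ is omitted, $f-b$ never vanishes, so $g$ has no poles; as $f$ is transcendental meromorphic, $g$ is a transcendental entire function whose only essential singularity is at $\infty$. Writing $s=1/r$, one has $f^{-1}(D_r(b))=\{|g|>s\}$, $\mathbb{C}\setminus f^{-1}(D_r(b))=\{|g|\le s\}$ and $\partial f^{-1}(D_r(b))=f^{-1}(\partial D_r(b))=\{|g|=s\}$, so the \textit{bov} hypothesis for the disk $D_{r_*}(b)$ in the definition says precisely that, with $s_*=1/r_*$, every component of $\{|g|=s_*\}$ is bounded. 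The first assertion of (1), that $b$ is an asymptotic value, is the classical fact quoted in the introduction that every omitted value of a meromorphic function is an asymptotic value, so nothing new is needed there.

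I would first record two structural facts that cost nothing. By the maximum modulus principle, $g$ cannot attain on the interior of a bounded component of $\{|g|>s\}$ a modulus exceeding the value $s$ it takes on that component's boundary; hence \emph{every} component of $\{|g|>s\}$ is unbounded, for every $s$ and independently of the \textit{bov} hypothesis. Next, applying Lemma~\ref{RH} to $f:V'\to D_{r}(b)$, where $V'$ is a component of $f^{-1}(D_r(b))$: alternative (1) would make $f:V'\to D_r(b)$ proper and hence surjective, forcing $f$ to attain the centre $b$, which is impossible since $b$ is omitted. So alternative (2) always holds and $f:V'\to D_r(b)$ is infinite-to-one, omitting $b$ and at most one further value. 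This already shows the singularity over $b$ is not proper; it is the \textit{bov} boundedness hypothesis that must upgrade this to infinite connectivity rather than the simple connectivity of the logarithmic case.

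The crux, and the step I expect to be hardest, is claim (2) that \emph{every} component $K$ of $\{|g|\le s\}$ is bounded. If some $K$ were unbounded it would contain a path tending to $\infty$ on which $|g|\le s$, so $g$ would be bounded along a path to $\infty$ and, by a normal families and cluster set argument, would possess a finite asymptotic value $\alpha$ with $|\alpha|\le s$. The plan is to contradict this using the bounded level components together with the first structural fact: a bounded connected set cannot separate the plane into two unbounded pieces, so an unbounded $K$ meeting each of its bounded boundary curves $\sigma_i\subseteq\{|g|=s\}$ across an unbounded component of $\{|g|>s\}$ cannot itself be unbounded unless $\{|g|>s\}$ acquires a bounded component, which the maximum principle forbids; the alternative, that $K$ fills a full neighbourhood of $\infty$, would make $g$ bounded near $\infty$ and contradict that $\infty$ is an essential singularity. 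Granting this, the remaining half of (1) is immediate: an asymptotic value $a\ne b$ produces a path to $\infty$ whose tail lies in $\{|f-b|>\rho\}\subseteq\mathbb{C}\setminus f^{-1}(D_\rho(b))$ for a suitable $\rho$, hence inside an unbounded complementary component, contradicting boundedness. Thus $b$ is the only asymptotic value.

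Finally I would obtain infinite connectivity from Picard's theorem. Each bounded component $K$ of $\{|g|\le s\}$ carries a proper, hence finite-degree, map $g:K\to\overline{D_s(0)}$, so $g$ takes each value of $D_s(0)$ only finitely often on $K$. As $g$ is transcendental entire it assumes all but at most one value infinitely often, so some $w_0\in D_s(0)$ has infinitely many preimages, all lying in $\{|g|<s\}$; were there only finitely many bounded complementary components, this would overload one of them. Hence $\{|g|\le s\}$ has infinitely many components, and since every component of $\{|g|>s\}$ is unbounded while every boundary component is bounded, $\{|g|>s\}=f^{-1}(D_r(b))$ is connected; together these give $c(f^{-1}(D_r(b)))=\infty$. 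Running the maximum modulus and separation arguments at each level $s\ge s_*$ shows the bounded-boundary property persists for all smaller radii, which furnishes the $r_0$ in the statement and completes both parts.
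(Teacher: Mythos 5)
You should first note a point of comparison that the framing obscures: the paper does not prove this lemma at all --- it is imported verbatim from \cite{bov} (``The following lemma proved in \cite{bov} \dots''), so you are in effect reproving a cited result rather than an argument the paper contains. That said, your skeleton is the natural one and several pieces are sound: the reduction to the entire function $g=\frac{1}{f-b}$ with $f^{-1}(D_r(b))=\{|g|>1/r\}$, the maximum-modulus fact that every component of $\{|g|>s\}$ is unbounded, the Picard-type count showing some $w_0\in D_s(0)$ has infinitely many preimages while each bounded complementary component can absorb only finitely many (giving infinitely many complementary components), and the derivation of part (1) from part (2) via the tail of an asymptotic path.

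The crux --- boundedness of every component of $\{|g|\le s\}$ --- is genuinely gapped, in three places. (a) The claimed mechanism is false: an entire function bounded on a path to infinity need not have a finite asymptotic value ($\sin z$ is bounded on $\mathbb{R}$ and has no finite asymptotic value), and in any case an unbounded component of the closed set $\{|g|\le s\}$ need not contain a path to $\infty$, since unbounded continua need not be path-connected (one must pass to an open component of $\{|g|<s'\}$). (b) Your separation step is underpowered: ``a bounded connected set cannot separate the plane into two unbounded pieces'' concerns a \emph{single} bounded continuum, whereas $\partial V$ for a component $V$ of $\{|g|>s_*\}$ consists of infinitely many bounded components. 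The statement actually needed is that a plane domain all of whose boundary components are bounded has all complementary components bounded; this is true but requires a genuine argument --- e.g.\ in $\widehat{\mathbb{C}}$, if $E$ is an unbounded complementary component, the boundary of the component $\Omega$ of $\widehat{\mathbb{C}}\setminus(E\cup\{\infty\})$ containing $V$ is connected (unicoherence of the sphere), and boundary bumping shows each component of $\partial\Omega\setminus\{\infty\}$ clusters at $\infty$, hence would be an unbounded subset of some bounded component of $\partial V$, a contradiction. With that lemma your argument does close \emph{at the bov radius} $r_*$: $\{|g|>s_*\}$ is connected and every component of $\{|g|\le s_*\}$ is bounded. (c) The most serious gap is your last sentence. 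The bov hypothesis gives bounded boundary components for one disk $D_{r_*}(b)$ only, while the lemma asserts the conclusion for all $r<r_0$; since $\mathbb{C}\setminus f^{-1}(D_r(b))=\{|g|\le 1/r\}$ \emph{grows} as $r$ shrinks, nothing persists automatically --- a priori the bounded islands of $\{|g|\le s_*\}$ could merge into unbounded chains at a higher level $s>s_*$, and ruling this out is precisely the content of the result cited from \cite{bov}. ``Running the maximum modulus and separation arguments at each level'' does not do this: the maximum principle holds at every level for free, but the bounded-level-set input at level $s>s_*$ is exactly what is unproven. As written, your proposal establishes the lemma only for the single radius $r_*$, modulo (a) and (b), not for all small $r$.
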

This lemma leads to some important facts setting a context for the results of this article. The backward orbit $O^{-}(\infty)$ of $\infty$ is the set $\{z \in \widehat{\mathbb{C}}: f^k (z)=\infty~\mbox{for some }~k \geq 0\}$. Note that $\infty$ and all the poles of $f$ are in $O^{-}(\infty)$. By a Julia component, we mean a maximally connected subset of the Julia set.

\begin{lem}\label{singleton}
	Let $f$ be a meromorphic function having a bov. Then $f$ has  infinitely many poles. Further, if the Fatou set contains the bov and $U$ is the Fatou component containing it then, 
	\begin{enumerate}

\item The preimage of $U$ is connected (and hence unbounded) and this is the only unbounded  Fatou component of $f$. Further, it is infinitely connected.
\item If $U'$ is a Fatou component such that $U'_{k} =U$ for some $k > 1$ then $U'$ is infinitely connected.
\item All the components of $\mathcal{J}(f) \cap \mathbb{C}$ are bounded. Further, every Julia component containing a point of $O^{-1}(\infty)$ is singleton.
	\end{enumerate}
\end{lem}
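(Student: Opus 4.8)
The plan is to extract everything from the bounded complementary components of $f^{-1}(D_r(b))$ furnished by Lemma~\ref{bov-implication1}(2). Fix $r<r_0$ with $D_r(b)\subseteq U$ (possible since $b\in U$ is open), so that $f^{-1}(D_r(b))$ is the connected, infinitely connected set of that lemma, and write $\mathbb{C}\setminus f^{-1}(D_r(b))=\bigcup_j K_j$ where each $K_j$ is a bounded component. For the first assertion I would argue that each $K_j$ contains a pole: on $\partial K_j\subseteq\partial f^{-1}(D_r(b))$ one has $|f-b|=r$, while on the interior of $K_j$ one has $|f-b|\ge r$; if $K_j$ were free of poles then $f-b$ would be holomorphic on $\overline{K_j}$ and the maximum modulus principle would force $|f-b|\equiv r$ on $K_j$, making $f$ constant. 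Since $f^{-1}(D_r(b))$ is infinitely connected there are infinitely many $K_j$, hence infinitely many poles. This uses only Lemma~\ref{bov-implication1}(2) and so holds without the stability hypothesis.

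For (1), let $W_0$ be the component of $f^{-1}(U)$ containing the connected set $f^{-1}(D_r(b))$. To see $f^{-1}(U)=W_0$, suppose $W_1$ is another component; then $W_1\cap f^{-1}(D_r(b))=\emptyset$, so $f(W_1)\cap D_r(b)=\emptyset$. But Lemma~\ref{RH} applied to $W_1$ as a component of the preimage of $U$ gives that $f\colon W_1\to U$ is either proper (hence surjective) or infinite-to-one omitting at most two values; either way $f(W_1)$ must meet the disk $D_r(b)$, a contradiction. Thus $f^{-1}(U)$ is connected. It is unbounded because it contains $f^{-1}(D_r(b))$, which is unbounded (all $K_j$ being bounded). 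If $V\ne f^{-1}(U)$ were another unbounded Fatou component, then $V$ would be disjoint from $f^{-1}(D_r(b))\subseteq f^{-1}(U)$, hence $V\subseteq\bigcup_j K_j$; connectedness of $V$ would place it in a single bounded $K_j$, contradicting unboundedness. Finally, every pole lies in $\widehat{\mathbb{C}}\setminus f^{-1}(U)$ (poles map to $\infty\notin U$), and two poles in distinct $K_i,K_j$ cannot be joined by a path in $\widehat{\mathbb{C}}\setminus f^{-1}(U)\subseteq\widehat{\mathbb{C}}\setminus f^{-1}(D_r(b))$; so they lie in different components of $\widehat{\mathbb{C}}\setminus f^{-1}(U)$. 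Infinitely many $K_j$ then force $c(f^{-1}(U))=\infty$.

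For (2) I would descend along the orbit. If $U'_k=U$ with $k>1$, then $f(U'_{k-1})\subseteq U$ forces $U'_{k-1}=f^{-1}(U)$, which is infinitely connected by (1). Now argue backwards: whenever $c(U'_{j})=\infty$, the component $U'_{j-1}$ of $f^{-1}(U'_j)$ also satisfies $c(U'_{j-1})=\infty$, since in Lemma~\ref{RH} case (1) preserves infinite connectivity and case (2) yields it from $c(U'_j)=\infty>2$. Iterating from $j=k-1$ down to $j=0$ gives $c(U')=\infty$.

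For (3), note $f^{-1}(D_r(b))\subseteq\mathcal{F}(f)$ by complete invariance of the Fatou set, so $\mathcal{J}(f)\cap\mathbb{C}\subseteq\bigcup_j K_j$; any Julia component, being connected, lies in one bounded $K_j$ and is therefore bounded. For the singleton statement the crux is the base case $z_0=\infty$: since every $K_j$ is bounded, $\widehat{\mathbb{C}}\setminus f^{-1}(D_r(b))=\{\infty\}\cup\bigcup_j K_j$ and no bounded $K_j$ is connected to $\infty$, so the component containing $\infty$ is exactly $\{\infty\}$; as the Julia component of $\infty$ is a connected subset of $\widehat{\mathbb{C}}\setminus f^{-1}(D_r(b))$ containing $\infty$, it must equal $\{\infty\}$. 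From here I would induct on the least $k$ with $f^k(z_0)=\infty$: if the Julia component of $w_0=f(z_0)$ is the singleton $\{w_0\}$, then the Julia component $C$ of $z_0$ satisfies $f(C)\subseteq\{w_0\}$ (forward invariance of $\mathcal{J}(f)$ and connectedness of $f(C)$), so $C\subseteq f^{-1}(w_0)$, a discrete set, whence $C=\{z_0\}$. The main obstacle is precisely this base step, namely isolating $\infty$ as its own Julia component, together with the infinitely-connected claim in (1); both are delicate only insofar as they require careful bookkeeping of the bounded islands $K_j$ and the poles they contain, which is exactly the information Lemma~\ref{bov-implication1}(2) supplies.
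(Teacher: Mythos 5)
Your proof is correct and rests on the same skeleton as the paper's: everything is extracted from the bounded complementary islands of $f^{-1}(D_r(b))$ supplied by Lemma~\ref{bov-implication1}(2). Where you differ is in three local arguments, each replacing a step the paper handles by properness or by citation. (i) To place a pole in each island $K_j$ you apply the maximum modulus principle to $f-b$ (equal to $r$ on $\partial K_j$, at least $r$ inside), whereas the paper observes that each island, being a bounded preimage component of $\widehat{\mathbb{C}}\setminus\overline{D_r(b)}$, is mapped \emph{onto} that set and hence meets $f^{-1}(\infty)$. Your version is more elementary; the only point to note is that it needs the islands to have nonempty interior (true here, and automatic in the paper, which takes components of the open set $\mathbb{C}\setminus f^{-1}(\overline{D_r(b)})$). (ii) For part (2) you descend one step at a time through Lemma~\ref{RH}, using that both alternatives of that lemma propagate infinite connectivity backwards; the paper instead invokes Lemma~\ref{proper} to make $f^k\colon U'\to U_{-1}$ proper in one shot and applies Lemma~\ref{RH} once. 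Your route avoids Lemma~\ref{proper} entirely. (iii) For the singleton Julia components you give a self-contained induction along the backward orbit of $\infty$ using discreteness of fibres, where the paper cites Lemma 4 of Nayak--Zheng. All three substitutions are sound, and your justification via Lemma~\ref{RH} of why every component of $f^{-1}(U)$ must meet $f^{-1}(D_r(b))$ usefully fills in a step the paper leaves implicit.
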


\begin{proof}
	First we make an useful observation that is true even when the bov is not in the Fatou set.
Consider a ball $D_r(b)$ centered at the bov $b$ such that its boundary does not contain any critical value of $f$. Then for sufficiently small $r$, $ f^{-1}(D_r(b))$ is infinitely connected and  each component of $\mathbb{C} \setminus f^{-1}(D_r (b))$ is bounded (by Lemma~\ref{bov-implication1}(2)). The boundary of each such component does not contain any critical point and is a simple closed curve bounding a simply connected domain. Let $D_1, D_2, D_3, \cdots$ be the components of $\mathbb{C} \setminus f^{-1}(\overline{D_r(b))} $. 	Each $D_i$ is mapped onto $\widehat{\mathbb{C}} \setminus \overline{D_r(b)}$	and therefore it contains a pole. Since there are infinitely many $D_i$'s, the function $f$ has infinitely many poles. 
\par
Let the bov $b$ be in the Fatou set and $r>0$ be small enough so that $\overline{D_r(b)} \subset U$.
Then the $D_i$'s, as defined in the previous paragraph have their boundaries in the Fatou set of $f$. In fact, $\mathbb{C} \setminus (\cup_i D_i)$ is contained in the Fatou set.

\begin{enumerate}
		
\item
 The set $f^{-1}(D_{r}(b))$ is connected by Lemma~\ref{bov-implication1}(2). Since each component of $f^{-1}(U)$ intersects $f^{-1}(D_{r}(b))$, the set $f^{-1}(U)$ is connected. Let $U_{-1}=f^{-1}(U)$. It follows again from Lemma~\ref{bov-implication1}(2) that $U_{-1}$ is the only unbounded Fatou component and is infinitely connected. 
\item  If $U'$ is a Fatou component such that $U'_{k} =U$ for some $k > 1$ then  $U'$ is contained in some $D_i$. Hence $U'$ is  bounded. The map $f^{k}: U' \to U_{-1}$ is proper by Lemma~\ref{proper} because the composition of finitely many proper maps is a proper map. Now it follows from Lemma~\ref{RH} that $U'$ is infinitely connected.
\item
Observe that each component of $\mathcal{J}(f) \cap \mathbb{C}$ is contained in some $D_i$ and hence is bounded. It now follows that the Julia component containing $\infty$ is singleton.	A singleton Julia component is mapped onto a singleton Julia component (See Lemma 4, ~\cite{tk-zheng}).
Therefore, every Julia component containing a point of the backward orbit of $\infty$ is singleton. 
	\end{enumerate}
\end{proof}
\begin{rem}\label{disk-simplyconnecteddomain}
 In the proof of the previous theorem, the disk $D_r(b)$ can be replaced by any open connected region containing the bov and compactly contained in $U$.
 
\end{rem}
An asymptotic value of a meromorphic function with finite order is either a limit point of its critical values or all the singularities lying over that asymptotic value are logarithmic (Corollary 1,~\cite{berg-ermk}). In other words, if there is a non-logarithmic singularity lying  over an asymptotic value of a meromorphic function with finite order then this asymptotic value is a limit point of critical values. We prove this statement for all the meromorphic functions with bov regardless of their orders.

\begin{lem}\label{critically infinite}
If a meromorphic function has a bov then the bov   is a limit point of its critical values.
\end{lem}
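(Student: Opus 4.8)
The plan is to argue by contradiction, feeding Bolsch's dichotomy (Lemma~\ref{RH}) into the fact, recorded in Lemma~\ref{bov-implication1}, that the unique singularity lying over the bov is direct but \emph{not} logarithmic. Suppose $b$ were not a limit point of the critical values of $f$. Since $b$ is omitted it is in particular not a critical value, so one could choose $\rho$ with $0<\rho<r_0$ such that $\overline{D_\rho(b)}$ contains no critical value of $f$. Then $W:=f^{-1}(D_\rho(b))$ would contain no critical point of $f$, and because $b$ is omitted we also have $W=f^{-1}(D_\rho(b)\setminus\{b\})$. By Lemma~\ref{bov-implication1}(2), for such $\rho$ the set $W$ is infinitely connected (in particular connected), so $c(W)=\infty$.

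Next I would apply Lemma~\ref{RH} to the punctured disk $\Omega=D_\rho(b)\setminus\{b\}$, whose connectivity is $c(\Omega)=2$, with $W$ as the component of $f^{-1}(\Omega)$. In the proper case (part (1)), the absence of critical points forces $n=0$, whence $c(W)-2=d\,(c(\Omega)-2)+n=d\cdot 0+0=0$, giving $c(W)=2$ and contradicting $c(W)=\infty$. (One may instead apply the lemma to the full disk $D_\rho(b)$ to rule out the proper case outright, since a proper map of positive degree would attain the omitted value $b\in D_\rho(b)$.) Thus only the infinite-to-one alternative, part (2), can occur.

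It remains to dispose of the infinite-to-one case, which is the crux of the argument. Here $f:W\to\Omega$ is infinite-to-one with no critical points, hence an unbranched covering map onto the punctured disk; the only connected, infinite-sheeted, unbranched covering of a punctured disk is the universal one, so $W$ is simply connected and $f:W\to D_\rho(b)\setminus\{b\}$ is a universal covering. By definition this makes the singularity over $b$ logarithmic, contradicting Lemma~\ref{bov-implication1}. Equivalently, $W$ is an unbounded neighbourhood of the essential singularity at $\infty$ on which $f$ is infinite-to-one, so a finite-degree model $w\mapsto w^{n}$ is impossible and the universal-cover (logarithmic) model is the only option. Since both alternatives of Lemma~\ref{RH} lead to a contradiction, the assumption fails and $b$ must be a limit point of the critical values. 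The main obstacle is precisely this last step: making rigorous that an unbranched, infinite-to-one self-map onto a punctured disk is forced to be a logarithmic covering. I would justify it through the classification of the connected coverings of $D_\rho(b)\setminus\{b\}$ by the subgroups of its fundamental group $\mathbb{Z}$, the trivial (infinite-index) subgroup yielding exactly the universal, logarithmic cover while every finite-index subgroup yields a proper finite-degree map already excluded above.
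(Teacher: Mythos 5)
Your argument is correct and is essentially the paper's own: both proofs reduce to the observation that $f$ restricted to the preimage of a small punctured disk about $b$ is a covering of that punctured disk, so the fundamental group of the preimage embeds in $\pi_1(D_\rho(b)\setminus\{b\})\cong\mathbb{Z}$ and is therefore trivial or infinite cyclic, which is incompatible with the infinite connectivity given by Lemma~\ref{bov-implication1}(2); your detour through Lemma~\ref{RH} merely separates the finite-index (finite cyclic cover) and trivial-subgroup (universal cover) cases that the paper handles in one stroke. The one step to tighten is the assertion that ``unbranched and infinite-to-one'' makes $f:W\to\Omega$ a covering map --- a local homeomorphism need not be evenly covered --- so you must also note that $\Omega$ contains no asymptotic value of $f$, which is exactly what Lemma~\ref{bov-implication1}(1) supplies and what the paper explicitly invokes at this point.
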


\begin{proof}
Let $b$ be the bov of a meromorphic function $f$.
Suppose on the contrary that there is a neighbourhood $N$ of $b$  which does not contain any critical value of $f$. Since $b$ is the only asymptotic value of $f$ (by Lemma \ref{bov-implication1}$(1)$),  $f:f^{-1}(N) \rightarrow N\backslash \{b\}$ is a covering. The fundamental group, $\pi_1 (S)$ of a Riemann surface $S$ at a fixed base point $x_0$ is the group of all loops with the same starting and ending point $x_0$ such that none of these is homotopically equivalent to the other.
Note that $N_{-1}=f^{-1}(N)$ is connected in this case. It is well-known that $\pi_1 (N_{-1})$ is a subgroup of $\pi_1 (N\backslash \{b\})$. Since $\pi_1(N\backslash \{b\})$ is isomorphic to the additive group $\mathbb{Z}$, $\pi_1(N_{-1})$ is either trivial or is  isomorphic to $\mathbb{Z}$. But the set $N_{-1}$ is infinitely connected by Lemma \ref{bov-implication1}(2), which means that $\pi_1(N_{-1})$ has infinitely many generators. Hence it is neither isomorphic to the trivial group nor to $\mathbb{Z}$ leading to a contradiction. This proves that the bov is a limit point of critical values.
	
\end{proof}
There are functions for which the bov is the only limit point of critical values.  
\begin{rem}
	For each non-zero complex number $c$ and natural number $d$, the function  $f(z)=c z^d+e^z $ has bov at $\infty$. We prove this by using a known theorem stating that if the image of every unbounded curve under an entire function is unbounded  then $\infty$ is the bov of the function (Theorem 2.2~\cite{bov}). To show that the image of every unbounded curve $\gamma$ under $f(z)=c z^d+e^z $ is unbounded, first note that if for an unbounded sequence $z_n \in \gamma$, the image sequence $\{f(z_n)\}_{n>0}$ is unbounded then $f(\gamma)$ is unbounded. There are three cases depending on the location of the unbounded part of $\gamma$ and we provide such an unbounded sequence on $\gamma$ in each case. 
\begin{itemize}
	\item \textbf{Case-1}: If there is a real number $M$ such that $\gamma_M=\{z \in \gamma: \Re(z)<M\}$ is unbounded then for an ubounded sequence $z_n \in \gamma_M$ (with $\lim_{n \to \infty} |z_n| =\infty$), we have $|f(z_n)| \geq |c| |z_n|^d -|e^{z_n}| \geq |c| |z_n|^d -e^M$. Consequently the image sequence $\{f(z_n)\}_{n>0}$ is unbounded.
	\item \textbf{Case-2}: If there is $\alpha \in (0, \frac{\pi}{2})$ such that $\gamma_{\alpha} =\{z \in \gamma: Arg(z) \in (-\alpha, \alpha)\}$ is unbounded then for an unbounded sequence $z_n =x_n +i y_n \in \gamma_\alpha$ we have $(-\tan \alpha)x_n < y_n < (\tan \alpha)x_n $. Here and later, $Arg(z)$ denotes the principal argument of $z$. Now $|c(z_n)^d|<k  |x_n|^d $ where $k= |c|(1+\tan^2 \alpha)^{\frac{d}{2}}$ and $|f(z_n)| \geq e^{x_n}-k |x_n|^d$. In view of Case(1) above, we assume that $x_n>0$ and $\lim_{n \to \infty} x_n =\infty$. Then $e^{x_n} -k (x_n)^d$, the right hand side of this inequation goes to $\infty$ as $n \to \infty$. This is obvious as $e^{x} -k x^d > (\frac{1}{d !}-k)x^d +\frac{x^{d+1}}{(d+1)!}=x^d (\frac{1}{d!}-k+\frac{x}{(d+1)!})> x^d $ for sufficiently large positive $x$. Therefore $\{f(z_n)\}_{n>0}$ is unbounded.
	
	\item \textbf{Case-3}: If $\gamma$ does not satisfy the conditions of Case(1) and Case(2) then for every $\beta \in (0, \frac{\pi}{2})$, $\gamma_\beta  \cup \overline{\gamma_\beta}$ is unbounded where $\gamma_\beta =\{z \in \gamma: Arg(z) \in (\beta, \frac{\pi}{2})\}$ and $\overline{\gamma_\beta}=\{\overline{z}: z \in \gamma_\beta\}$. There are three situations; either $\gamma_\beta$ or $\overline{\gamma_\beta}$ contains an unbounded curve,  or $\gamma \cap \{z: Arg(z) \in (\beta, \frac{\pi}{2})\}$ (also $\gamma \cap \{z: Arg(z) \in (- \frac{\pi}{2}, -\beta)\}$) contains a sequence of bounded arcs $\gamma_n$ whose one end accumulates at a finite point and whose lengths tend to $\infty$ as $n \to \infty$. The latter follows from the fact that $\gamma$ does not satisfy the condition of Case (2). In the first two cases, the set $\{\Im(z): z \in \gamma_\beta\}$ or $\{\Im(z): z \in \overline{\gamma_\beta}\}$ contains an unbounded interval whereas $\Im(z), z \in \gamma$ takes all possible real values in the third case.

\par
 Either  $\gamma_\beta$ or $\overline{\gamma_\beta}$ is unbounded. Assume that $\gamma_{\beta}$ is unbounded. If $\overline{\gamma_\beta}$ is unbounded the proof goes exactly in the same way. The image of $\gamma_\beta$  under $z \mapsto -cz^d$ (not $z \mapsto cz^d$ !) is contained in a sector with opening $d|\frac{\pi}{2}-\beta|$ (i.e., the principal arguments of the two lines emanating from the origin and  bounding the sector differ by $d|\frac{\pi}{2}-\beta|$). Choose $\beta$ sufficiently near to $\frac{\pi}{2}$ such that $d|\frac{\pi}{2}-\beta| < \frac{\pi}{4}$. Let $S_{\beta}$ be the image of the sector $\{z: Arg(z) \in (\beta, \frac{\pi}{2})\}$ under  $z \mapsto -cz^d$. Then choose $z_0 \in \gamma_\beta$ such that  $-e^{z_0} \in S_{\beta}$, i.e.,  $e^{z_0}$  is in the sector which is the reflection of $S_\beta$ with respect to the origin. This is possible as $\gamma_\beta$ contains points with all possible large positive imaginary part (by the conclusion of the previous paragraph). For the same reason, we can take $z_n \in \gamma_\beta$ such that $\Im(z_n)=\Im(z_0)+2 \pi n$ for all natural numbers $n$.  The points $e^{z_n}$ is on the ray emanating from the origin, containing $e^{z_0}$ and lying in  $-S_\beta =\{-z: z\in S_\beta\}$. Both the sequences $e^{z_n}$ and $-c(z_n)^d$ go to $\infty$ and are in two sectors that are reflections of each other with respect to the origin. Note that $\Re(z_n) \to \infty$ as $n\to \infty$ and therefore $\lim_{n \to \infty}e^{z_n}=\infty$ (as the condition of Case(1) is not satisfied). The number  $|f(z_n)|$ is nothing but the distance between $e^{z_n}$ and $-c(z_n)^d$ and tends to $\infty$ as $n \to \infty$.
\end{itemize}	
	 \label{entire-bov}
\end{rem}
\begin{rem}
 The point at  $\infty$ is the bov for $f(z)=cz^d+e^z$ and consequently, $0$ is the bov of $\frac{1}{f(z)}$.  
The critical points of $\frac{1}{f(z)}$ are the roots of   $c d z^{d-1}+e^{z}$. There are infinitely many roots if $d=1$. For $d>1$, the function  $c d z^{d-1}+e^{z}$ has bov at $\infty$ (by Remark~\ref{entire-bov}) and hence it takes $0$ (in fact every value) infinitely often. This is because every point with only finitely many preimages under an entire function is an asymptotic value of the function whereas an entire function with bov has no finite asymptotic value. Let the critical points of $\frac{1}{f}$  be $z_n, n=1,2,3,\cdots$. Then the critical values are  $\frac{1}{c (z_n)^d-cd (z_n)^{d-1}}$. Since $\lim_{n \to \infty} z_n=\infty$, the critical values tend to $0$ as $n \to \infty$. Therefore, the point $0$ is the bov of  $\frac{1}{f(z)}$ and is the only limit point of its critical values.
	\label{onlylimitpoint}
\end{rem}

\section{Proofs of results}
This section proves all the results of this article.
\subsection{Topology of Fatou components}
Recall that for a Fatou component $V$ of $f$, $V_n$ denotes the Fatou component containing $f^{n}(V)$. We say a Fatou component $V'$ lands on a Fatou component $V$ if $V'_k=V$ for some $k \geq 1$. The following is a restatement of Theorem 3 proved in ~\cite{tk-zheng}. The omitted value mentioned in this lemma is not necessarily a bov. 

\begin{lem}
\label{onefatoucomponent}
Let $f$ be a meromorphic function with at least one omitted value and such that all omitted values be contained in a Fatou component $U$ of $f$. For each Fatou component $V$ with $V_n \neq U$ for any $n \geq 0$ the following are true.
\begin{enumerate}
\item If $U$ is unbounded, then $c(V_n)=1$ for all $n \geq 0$.
\item If $U$ is bounded, then $c(V)=1$ or $V$ lands on a Herman ring.
\item If $U$ is wandering, then $c(U_n)=1$ for all $n \geq 0$.
\item Let $U$ be pre-periodic but not periodic.
If $U$ is unbounded, then $c(U_n)=1$ for all $n \geq 0$. If $U$ is bounded,
then $c(U)=1$ or $U$ lands on a Herman ring.
\item If $U$ is periodic, then $c(U_n)$ is either $1$ or $\infty$ for all $n \geq 0$.
		
	\end{enumerate}
\end{lem}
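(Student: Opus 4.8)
The engine of the proof is the Riemann--Hurwitz relation of Lemma~\ref{RH}, propagated along the forward orbit $V=V_0\to V_1\to\cdots$ and anchored at the terminal periodic cycle whenever the orbit is eventually periodic. It is convenient to rewrite the formula of Lemma~\ref{RH}(1) in Euler-characteristic form: whenever $f:V_n\to V_{n+1}$ is proper of degree $d_n$ with ramification $\nu_n\ge 0$, one has $2-c(V_n)=d_n\bigl(2-c(V_{n+1})\bigr)-\nu_n$ with $0\le\nu_n\le 2d_n-2$. Three consequences drive everything: if $c(V_{n+1})=\infty$ then $c(V_n)=\infty$; if $c(V_{n+1})=2$ then $c(V_n)=2+\nu_n$ is finite and $\ge 2$; and if $c(V_{n+1})=1$ then $c(V_n)=2-d_n+\nu_n$, so simple connectivity is preserved backward exactly when the map is unramified. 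I would also use the standard trichotomy for periodic components (connectivity $1$, $2$ or $\infty$, with finite multiple connectivity forcing a Herman ring) together with the classical fact that a rotation domain, a Siegel disk or a Herman ring, contains no singular value in its interior.

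The omitted value $b$ is an asymptotic, hence singular, value lying in $U$, and two structural facts flow from this. First, since a rotation domain carries no interior singular value, $U$ is never a Siegel disk or a Herman ring; as the same holds for each $U_n$, this already gives $c(U_n)\in\{1,\infty\}$ when $U$ is periodic (conclusion $(5)$) and removes the Herman-ring alternative for $U$ itself in the remaining cases. Second, I would show that the infinite-to-one alternative of Lemma~\ref{RH}(2) can occur only for a map whose image carries the asymptotic value $b$, that is, only for maps onto $U$: the finitely many values omitted by an infinite-to-one restriction are asymptotic values reached along paths to the essential singularity inside that component, and for a function governed by the omitted value these are accounted for by $b\in U$. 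Consequently, for a component $V$ with $V_n\neq U$ for all $n$, every map $f:V_n\to V_{n+1}$ is proper of finite degree, so the Euler-characteristic relation above applies throughout the orbit of $V$.

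Now the forward orbit of $V$ either wanders or lands on a periodic cycle $P$, which, being reachable from outside the grand orbit of $U$, is attracting, parabolic, Siegel ($c=1$), a Herman ring ($c=2$), or a Baker domain. If $P$ is a Herman ring, backward propagation from $c=2$ keeps the connectivity finite and $\ge 2$, which is precisely the ``$V$ lands on a Herman ring'' alternative of conclusion $(2)$. Otherwise the terminal connectivity is $1$; since no Herman ring occurs along the orbit and the relation controls the ramification, one propagates $c=1$ back to $V$, the point being to rule out $c(V)=\infty$, which reduces to confining infinite connectivity to the grand orbit of $U$ via the second fact above. This yields conclusion $(2)$, and the same argument with no Herman ring available in a wandering orbit yields $c(V_n)=1$ there.

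It remains to separate the bounded and unbounded cases for $U$ and to treat the orbit of $U$ itself. When $U$ is unbounded I would first show, exactly as in the proof of Lemma~\ref{singleton}$(1)$, that the asymptotic tract over $b$ produces a single unbounded component of $f^{-1}(U)$ and that $U$ is the unique unbounded Fatou component; hence every $V\neq U$ is bounded and maps properly by Lemma~\ref{proper}. One then upgrades the conclusions to $c(V_n)=1$ (conclusion $(1)$) and, in the (pre)periodic or wandering cases for $U$, to $c(U_n)=1$ (conclusion $(3)$ and the unbounded part of $(4)$) by excluding Herman rings altogether: a Herman ring is bounded and separates the sphere into two complementary domains, each of which must carry a singular value on the postsingular side, and when $U$ is the unique unbounded component the outer domain is monopolized by the orbit of $b$, which I expect to force a contradiction; the bounded cases of $(2)$ and $(4)$ retain the Herman-ring alternative. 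The two genuinely delicate points, and the main obstacles, are precisely the confinement of the infinite-to-one behavior (and thus of infinite connectivity) to the grand orbit of $U$, and the exclusion of Herman rings when $U$ is unbounded. Both are where the omitted-value hypothesis, rather than mere asymptoticity, does the real work.
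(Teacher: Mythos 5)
The paper does not prove this lemma at all: it is quoted as Theorem 3 of \cite{tk-zheng}, and the proof there runs on a winding-number argument, not on Riemann--Hurwitz bookkeeping. A non-contractible Jordan curve $\gamma$ in a multiply connected component bounds a disk meeting the Julia set; some iterated image of that disk must contain a pole, and the next image then covers all of $\widehat{\mathbb{C}}$ except the bounded complementary components of the image of $\gamma$; since the omitted value has no preimage it must lie in one of those bounded components, so the image curve surrounds $U$ (forcing $U$ to be bounded), and all five conclusions are extracted from this mechanism. Your propagation scheme misses this and has genuine gaps. The most serious is the wandering case: your strategy anchors at a terminal periodic cycle and propagates backward, but a wandering orbit has no anchor, and multiply connected wandering domains are a real phenomenon for meromorphic functions --- \cite{rippon-2008}, cited in this very paper, constructs them with every finite connectivity. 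Excluding them is precisely where the omitted value does irreplaceable work, and you simply assert the conclusion. Even in the eventually periodic case backward propagation of $c=1$ fails: from $c(V_{n+1})=1$ the relation gives $c(V_n)=2-d_n+\nu_n$ with $\nu_n$ allowed up to $2d_n-2$, so $c(V_n)$ can be as large as $d_n$; ruling out such ramified preimages again requires the surrounding argument, not the formula.

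Your confinement of the infinite-to-one alternative to maps onto $U$ is also unsupported: in that alternative the restriction may omit no value at all, and even when it omits one, that value is only an asymptotic value of $f$, which the hypothesis does not force to equal $b$ (a function with one omitted value can have many other asymptotic values). Two further points. The ``classical fact'' that a rotation domain contains no singular value in its interior is not one --- critical values can lie inside Siegel disks; what is true and suffices is that $f^p$ maps a rotation domain onto itself while the omitted value $b\in U$ has no preimage, so $U$ cannot be a Siegel disk or Herman ring. And your claim that an unbounded $U$ is the unique unbounded Fatou component is imported from Lemma~\ref{singleton}(1), whose proof uses the bov hypothesis (connectedness of $f^{-1}(D_r(b))$ with bounded complementary components) and is unavailable for a general omitted value; you yourself leave the resulting Herman-ring exclusion as something you ``expect,'' which concedes the point.
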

 Though the above result is silent on the connectivity of the Fatou components landing on $U$, it is the main ingredient in the proof of Theorem~\ref{tfc} which determines the connectivity of every Fatou component whenever the function has a stable bov. We now present a proof of Theorem~\ref{tfc}.

\begin{proof}[Proof of Theorem~\ref{tfc}]   
	
	\begin{enumerate}
\item If $V \notin \mathcal{O}(U)$ then $V_n \neq U$ for any $n$ and it follows from Lemma~\ref{onefatoucomponent}(1-2) that $V$ lands on a Herman ring whenever it is multiply connected. Since for each $V' \in \mathcal{O}(V)$, $V'$ is bounded, there is a $k$ such that $V'_k$ is a Herman ring and $f^{k}: V' \to V'_{k}$ is proper by Lemma~\ref{proper}. Further, $c(V')=2+N$ by Lemma~\ref{RH}, where $N$ is the number of critical points of $f^k$ in $V'$ counting multiplicity. In other words, $1< c(V')  < \infty$.
 
\item 
Let $U$ be periodic and $U' \in \mathcal{O}(U)$. Then, the preimage $U_{-1}$ of $U$ is periodic and there is a $k \geq 0$  such that $f^k (U') \subseteq U_{-1}$. The map $f^k: U' \to U_{-1}$ is either proper  or is an infinite-to-one map. Since $U_{-1}$ is infinitely connected, $U'$ is infinitely connected by Lemma~\ref{RH} in both the cases.

\item 
If $U$ is pre-periodic then each Fatou component landing on $U$ is infinitely connected by Lemma~\ref{singleton}(2).

If a Fatou component $U'$ in $\mathcal{O}(U)$ not landing on $U$, is multiply connected then it lands on a Herman ring $H$ by Lemma~\ref{onefatoucomponent}(2). 
The map $f^k: U' \to H$ is proper for some $k$.  Now it follows from Lemma~\ref{RH} that  $1 < c(U') < \infty$. 

\item If $U$ is wandering then every Fatou component landing on $U$ is infinitely connected by Lemma~\ref{singleton}(2). Let $U'$ be a Fatou component in $\mathcal{O}(U)$ not landing on $U$. If 
$U'$ is multiply connected then it lands on a Herman ring by Lemma~\ref{onefatoucomponent}(2), giving that $U'$ is periodic or pre-periodic, which is not true. Thus $U'$ is simply connected for all $U' \in \mathcal{O}(U)$. 
\end{enumerate}
	
\end{proof}

The proof of Theorem~\ref{finite} follows.

\begin{proof}[Proof of Theorem~\ref{finite}]
Recall that $\mathcal{C}=\mathcal{F}(f) \cap \{c: f'(c)=0~\mbox{and}~f^n(c) \notin U~\mbox{for any}~n\}$. This set is finite, by assumption. Let  $\mathcal{C} = \{c_1, c_2,c_3, \cdots c_l\}$ and $d= \prod_{i=1}^l (d_i-1)$ where $d_i$ is the local degree of $f$ at $c_i$.  The desired finite set of connectivities is going to be $\mathcal{N}=\{1,2,3,\cdots, 2+d\}$. 
\begin{enumerate}

\item Let $V \notin \mathcal{O}(U)$. If $V$ is multiply connected then all the Fatou components in $\mathcal{O}(V)$ are multiply connected, by Theorem~\ref{tfc}. Further, each Fatou component in $\mathcal{O}(V)$ lands on a Herman ring. Clearly, this Herman ring is the same, say $H$ for all Fatou components in $\mathcal{O}(V)$.  Let $V' \in \mathcal{O}(V)$. Then $V'_{k} =H$ for some $k$. Since $H$ and its forward iterated images cannot contain any critical point, $V'_{n} $ intersects $\mathcal{C}$ for at most finitely many values of $n$ and a single $V'_{n}$ cannot contain any critical point more than once. In other words, $f^k: V' \to H$ is a proper map with degree at most $d$ and, by Lemma~\ref{RH}, $c(V')=2+ j$ where $j \leq d$. In other words, $c(V') \in \{2, 3, 4, \cdots, 2+d \}$. 
\par If $V$ is simply connected then $V'$ is simply connected for all $V' \in \mathcal{O}(V)$. 
\par Thus $c(V') \in \mathcal{N} $ for all $V' \in \mathcal{O}(V)$.
\item  Let $U' $ be a Fatou component landing on $U_k$ for some $k \geq 1$. Then it follows from  Theorem~\ref{tfc}(3) that either $c(U')=1$ or $U'$ lands on a Herman ring. Following the same argument as in (1), it is concluded that $c(U') \in \mathcal{N}$.
	\end{enumerate}
\end{proof}
\begin{rem}
Let $\tilde{\mathcal{O}}(U) $ be the set of all elements of $\mathcal{O}(U)$ which do not land on $U$. Then it follows from Theorem~\ref{tfc}(3) that if one Fatou component in $\tilde{\mathcal{O}}(U)$ is multiply connected then so are all others.
\end{rem}
 For proving Theorem~\ref{invariant-CIFC}, we need the following lemma, which is slightly more general than what is required. 
\begin{lem}\label{preimageconn}
Let $U$ be an invariant Fatou component of a meromorphic function $f$. If $f^{-1}(D)$ is connected, for some disk $ D \subset U$ then $U$ is completely invariant. In particular, every invariant Fatou component $U$ is completely invariant if it contains an omitted value over which there is only one singularity.
\end{lem}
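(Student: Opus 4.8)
The plan is to prove complete invariance by showing that \emph{every} component of $f^{-1}(U)$ coincides with $U$ itself, since $f^{-1}(U)=U$ is exactly the assertion $f^{-1}(U)\subseteq U$. First I would record the basic structure. Because $U$ is invariant, $f(U)\subseteq U$, so $U\subseteq f^{-1}(U)$; since the Fatou set is completely invariant we have $f^{-1}(U)\subseteq f^{-1}(\mathcal{F}(f))=\mathcal{F}(f)$, so every component of $f^{-1}(U)$ is a Fatou component, and as $U$ is a maximal connected subset of $\mathcal{F}(f)$ it is precisely one of these components. The task is then to rule out any other component.

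The key observation is that the disk $D$ is met by the image of every component of $f^{-1}(U)$. Indeed, let $W$ be any such component and apply Lemma~\ref{RH} to $f:W\to U$. In the proper case $f(W)=U\supseteq D$, while in the infinite-to-one case $f(W)$ assumes every value of $U$ except at most two; since $D$ is a disk it contains more than two points and so cannot be avoided, giving $f(W)\cap D\neq\emptyset$ in either case. Hence there is a point of $W$ mapping into $D$, i.e. $W\cap f^{-1}(D)\neq\emptyset$. Taking $W=U$ in particular shows $f^{-1}(D)\cap U\neq\emptyset$, so $f^{-1}(D)$ is a nonempty connected subset of $f^{-1}(U)$ that meets the component $U$, whence $f^{-1}(D)\subseteq U$. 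Combining the two facts, every component $W$ of $f^{-1}(U)$ satisfies $\emptyset\neq W\cap f^{-1}(D)\subseteq W\cap U$, forcing $W=U$. Therefore $f^{-1}(U)=U$ and $U$ is completely invariant.

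For the final ``in particular'' statement I would specialize to the situation where $U$ contains an omitted value $a$ over which there is only one singularity, and reduce it to the hypothesis just used. Choose $r$ small enough that $D=D_r(a)\subset U$. Since $a$ is omitted, $f^{-1}(a)=\emptyset$, so no component of $f^{-1}(D)$ contains a preimage of $a$; by the dichotomy of Lemma~\ref{RH}, each component maps onto $D$ as an infinite-to-one map and thus belongs to a singularity lying over $a$, in the nested-tract sense recalled in the introduction. As there is a single such singularity, for all sufficiently small $r$ the set $f^{-1}(D_r(a))$ has exactly one component, i.e. is connected, and the first part of the lemma then yields complete invariance.

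I expect the genuine obstacle to lie in this last reduction rather than in the main argument, which is essentially formal once Lemma~\ref{RH} is in hand. The delicate point is to justify that a single singularity over $a$ really forces $f^{-1}(D_r(a))$ to be \emph{connected} for all small $r$: one must exclude the possibility of several distinct components at a fixed level $r$ that all collapse to the same singularity as $r\to 0$. This is controlled by the description of transcendental singularities as nested families $r\mapsto V_r$ with $V_{r_1}\subset V_{r_2}$ recalled earlier, together with the fact that here each component of $f^{-1}(D_r(a))$ necessarily corresponds to such a family because $a$ is omitted.
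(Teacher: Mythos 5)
Your argument is correct, but for the main implication you take a different route from the paper. The paper derives the connectedness of $f^{-1}(U)$ from that of $f^{-1}(D)$ by citing Theorem 1 of Herring's paper \cite{herring}, and then concludes exactly as you do: forward invariance gives $U\cap f^{-1}(U)\neq\emptyset$, and a connected set in the Fatou set meeting $U$ lies in $U$. You instead bypass Herring's theorem entirely with a component-by-component argument: by the dichotomy of Lemma~\ref{RH}, the image of every component $W$ of $f^{-1}(U)$ omits at most two points of $U$, hence meets the disk $D$, so every such $W$ meets the connected set $f^{-1}(D)$, which itself lies in the component $U$; thus $W=U$. This is a clean, self-contained alternative that in effect reproves the special case of Herring's result that is needed here, at the cost of invoking the proper/infinite-to-one dichotomy for each component; the paper's citation is shorter but imports an external theorem. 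For the ``in particular'' clause your treatment matches the paper's, and you are right that the only delicate point is why a \emph{unique} singularity over the omitted value forces $f^{-1}(D_r(a))$ to be connected for all small $r$: one needs that every component of $f^{-1}(D_r(a))$ carries at least one nested family $s\mapsto V_s$ (which follows, as you indicate, because $a$ is omitted, so each component maps onto $D_r(a)$ minus at most two points and therefore contains components at every smaller level, from which a nested chain with empty intersection can be extracted), so that two distinct components at arbitrarily small radii would yield two distinct singularities. The paper asserts this step without proof, so your sketch is, if anything, more explicit than the published argument.
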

\begin{proof}
Since $f^{-1}(D)$ is connected, $f^{-1}(U)$ is connected by Theorem 1 of ~\cite{herring}. The forward invariance of $U$ gives that $U \cap f^{-1}(U)$ is non-empty. As $f^{-1}(U)$ is contained in a Fatou component, $f^{-1}(U)\subset U$. In other words, $U$ is backward invariant. Therefore $U$ is completely invariant. \\
If $U$ contains an omitted value over which there is only one singularity then $f^{-1}(D)$ is connected, for each sufficiently small ball $D$ contained in $U$ and  centered at the omitted value. Therefore,  every invariant Fatou component $U$ is completely invariant if it contains an omitted value over which there is only one singularity.
\end{proof}

 \begin{proof}[Proof of Theorem~\ref{invariant-CIFC}]
 	Let $f$ be a meromorphic function with a stable bov.
As evident from Lemma~\ref{bov-implication1}, there is only one singularity of  $f^{-1}$ lying over a bov.
This along with Lemma~\ref{preimageconn} proves that the invariant Fatou component $U$ containing the bov is completely invariant. 

\par 

Consider the closure $K $ of a simply connected subdomain of $ U$  such that $K$ contains the attracting fixed point, the bov and all the singular values of $f$.
Let $D$ be the closure of $\bigcup_{n \geq 0} f^{n}(K)$. Then $D$ is a forward invariant closed and connected subset of $U$. 
Now each Julia component other than the one containing  $\infty$ is contained in a component of $\mathbb{C} \setminus f^{-1}(D) $ (See Remark~\ref{disk-simplyconnecteddomain}).
 Let $J$ be a Julia component of $f$ and $z \in J$. If $\lim_{k \to \infty}f^{n_k}(z) =z^* \in \mathbb{C}$ for some subsequence $n_k$ and  the Julia component containing $z^*$ is $J^*$ then $J^* \subset V$ for some component $V$ of  $\mathbb{C} \setminus f^{-1}(D) $. Since $f^{-1}(D)$ is connected, $V$ is simply connected.  Further, there is a $k_0$ such that $f^{n_k}(J) \subset V$ for all $k>k_0$. Since all the singular values and their forward orbits are in $D$, $V$ does not intersect the forward orbit of any singular value, and hence each branch $f_k$ of $f^{-n_k}$ is a well-defined analytic map in $V$ for every $k > k_0$. Note that each $f_{k}(V)$ contains $J$. It is known that $\{f_k\}_{k>0}$ is normal on $V$ (for example, see Proposition 2~\cite{zheng-singu-2003}). 
Let $f^*$ be a limit function of a convergent subsequence of $\{f_k\}_{k>0}$ on $V$. Then $f^*$ is analytic by the Weirstrass Theorem. If $f^*$ is non-constant then it is observed that  $f^{*}(V)$ is an open set intersecting $J$.  Now a disk in $f^*(V)$ can be found which intersects the Julia set. The sequence $\{f^{n_k}\}_{k>0}$ is normal in this disk. However this cannot be true proving that $f^{*}$ is constant. Since $J$ is contained in the closure of $f^*(V)$, $J$ is singleton. Using a conformal conjugate of $f$, it can be shown that $J$ is singleton even if, for a $z \in J$ and a subsequence $n_k$, $\lim_{k \to \infty} f^{n_k}(z) = \infty$.  Here, it  is important to note that all the poles are simple and $\infty$ is not a critical value, by the hypothesis of this theorem.
Thus, the Julia set of $f$ is totally disconnected.

\end{proof}

\subsection{Baker domains and Herman rings}
We now provide the proof of Theorem~\ref{nobakerdomain}.

\begin{proof}[Proof of Theorem~\ref{nobakerdomain}]
	First we prove that  if the bov is stable then there is no Baker domain of any period.
Let  $U$ be the Fatou component containing the bov $b$.  Let $B$ be a Baker domain of $f$ and $\{B=B_0, B_1,\dots,B_{p-1}\}$ be the corresponding cycle. Suppose that $f^{np} \rightarrow l_i$ locally uniformly on $ B_i$ for $i=0, 1, 2,\dots, p-1$. Then there exists $j$ such that  $l_j =\infty$ and $B_j$ is unbounded. Without loss of generality assume that $j=0$. But it follows from Lemma \ref{singleton}$(2)$ that the set $U_{-1}=\{z:f(z)\in U\}$ is the unique unbounded Fatou component of $f$ and so $U_{-1}=B$. Take $z_0\in B$. Then $z_1=f^p(z_0)\in B$. For a simple curve $\gamma$ in $B$ joining $z_0$ and $z_1$ and containing these points, $f^p(\gamma)$ is a curve in $B$ joining $z_1$ and $z_2=f^p(z_1)$. Let $\gamma_0=
\gamma \cup f^p(\gamma)\cup f^{2p}(\gamma)\dots,$ and $\gamma_i=f^i(\gamma_0)$ for $i \in \{1, 2, 3,\dots, p-1\}$. 
We claim that $f^p(z) \to \infty$ as $z \to \infty$ along $\gamma_0$. The claim will be proved by showing that $\lim_{k \to \infty} f^p (w_k) = \infty$ whenever $w_k$ is a sequence on $\gamma_0$ converging to $\infty$. Let $\Omega$ be a  ball around $\infty$ with respect to the spherical metric. Since $\gamma$ is a compact subset of $B$ and $\lim_{n \to \infty} f^{np}(z)= \infty$ uniformly on $\gamma$, there exists a natural number $n_0$ such that 
\begin{equation}
f^{np}(\gamma) \in \Omega~\mbox{for all}~ n > n_0.	\label{misc1}\end{equation}  Each iterated image of $\gamma$ under $f^p$ is bounded and that is why  contains at most finitely many $w_k$s. More precisely, there is a $k_1$ such that $w_k \notin \cup_{n=1}^{n_0 } f^{np}(\gamma)$ for any $k >k_1$. In other words, for each $k >k_1$,  $w_k \in f^{np} (\gamma)$  for some $n >n_0$ giving that $f^p(w_k) \in f^{(n+1)p}(\gamma)$. Now it follows from Equation~\ref{misc1} that  $f^p(w_k) \in \Omega$ for all $k > k_1$. 
	
\par It follows from the previous paragraph that,
\begin{equation}\label{baker}
\mbox{ each}~  B_i ~\mbox{ contains}~  \gamma_i ,  f^p(\gamma_i)\subset \gamma_i  ~\mbox{and}~  f^p(z)\rightarrow l_i  ~\mbox{as}~  z\rightarrow l_i ~\mbox{along}~  \gamma_i.
\end{equation} 

In particular, $f(z)\rightarrow l_1$ as $z\rightarrow \infty$ along $\gamma_0$, where $l_1\in \partial B_1$. But $\gamma_0$ is an unbounded curve. Its image under $f$ accumulates at the bov which gives that $l_1=b$. This follows from the definition of bov (See Theorem 2.2,~\cite{bov}). However, this is not possible as $b \in B_1$. Therefore, the Fatou set of $f$ cannot contain any Baker domain whenever the bov is stable.	
\par 
By definition, every invariant Baker domain of a meromorphic function $f$ is unbounded and $\infty$ is the limit point of $\{f^n\}_{n>0}$ on it. By Equation~\ref{baker}, $\infty$ is an asymptotic value of $f$. But $f$ has no asymptotic value other than the bov by Lemma \ref{bov-implication1}$(1)$. This proves that $f$ has no invariant Baker domain.
	
\end{proof}

 For proving Theorem~\ref{nohermanring}, we need a result, which is  Theorem 3.8 of ~\cite{relevantpole}. A definition is required to state this.

\par

Let $H$ be a $p$-periodic Herman ring of a function $f$ with an omitted value and $\{H_0,~H_1,\dots ,H_{p-1}\}$ be the cycle of $H$, where $H=H_0=H_p$.  
Given a Herman ring $H$, a pole $w$ of $f$ is said to be $H$-relevant if some ring $H_i$ of the cycle containing $H$ surrounds $w$ i.e., the bounded component of $\widehat{\mathbb{C}} \setminus H_i$ contains $w$. 

\begin{lem}\label{periodicfatoucomponent}
	Let  $V$ be a periodic Fatou component of a meromorphic function $f$ such that its closure contains at least one omitted value. Then for every Herman ring $H$ of $f$, the number of $H$-relevant poles is strictly less than the period of $V$. In particular, if $V$ is invariant or $2$-periodic then $f$ has no Herman ring.
	
\end{lem}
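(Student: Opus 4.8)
\noindent\textbf{Proof proposal.} The plan is to study the cycle $\{H_0,\dots,H_{p-1}\}$ of a Herman ring $H$ by a winding–number count that exploits the one feature separating this situation from the classical one: the omitted value in $\overline{V}$ is never attained. First I would record the structural input that makes everything injective. The return map $f^p$ on $H_0$ is conformally conjugate to an irrational rotation of an annulus, hence has degree one; since $f^p$ factors as the composition $f\colon H_0\to H_1,\ \dots,\ f\colon H_{p-1}\to H_0$ and degrees multiply, each factor $f\colon H_i\to H_{i+1}$ has degree one and is a conformal bijection. In particular the rings carry no critical points, $f$ is injective on each $H_i$, and the core curve $\Gamma_i$ of $H_i$ is carried to a Jordan curve $f(\Gamma_i)$ that is homotopic to $\Gamma_{i+1}$ inside $H_{i+1}$.

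The heart of the argument is the argument principle applied to $f-b$, where $b\in\overline{V}$ is the omitted value. Orient $\Gamma_i$ positively, let $\Delta_i$ be its Jordan interior, and let $B_i$ be the bounded complementary component (the hole) of $H_i$; since $H_i$ lies in the Fatou set, the poles of $f$ in $\Delta_i$ are exactly the poles in $B_i$. Because $b$ is omitted there are no $b$-points in $\Delta_i$, so the number of poles in $B_i$ equals $-\,n\bigl(f(\Gamma_i),b\bigr)$. As $f(\Gamma_i)$ is a Jordan curve its winding number about any point is $0$ or $\pm1$, and non-negativity of a pole count then forces each hole $B_i$ to contain \emph{at most one} pole; moreover it contains a pole precisely when $b$ lies inside $f(\Gamma_i)$, equivalently (since $f(\Gamma_i)$ is homotopic to $\Gamma_{i+1}$) precisely when the neighbouring ring $H_{i+1}$ surrounds $b$. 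Reindexing, the number of $H$-relevant poles is therefore at most the number of rings of the cycle that surround $b$.

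It then remains to bound the number of rings surrounding $b$ by the period of $V$. Here I would use that $b\in\overline{V_0}$ with $V$ being $q$-periodic, so that $f^i(b)\in\overline{V_i}$ and the orbit of $b$ runs cyclically through $\overline{V_0},\dots,\overline{V_{q-1}}$. The rings that surround $b$ are mutually nested about $b$, and I would track this nested family under $f$, which carries a ring surrounding $f^n(b)$ to a ring surrounding $f^{n+1}(b)$ except where the single admissible pole forces the inside of $f(\Gamma_i)$ to flip to the outside. Combining this with the fact that $b$, being omitted, has no preimages (so no extra nesting can be created) should show that at most $q-1$ rings of the cycle can surround $b$, giving the asserted strict inequality. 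The ``in particular'' then follows by pairing this bound with the standard fact that every cycle of Herman rings surrounds at least one pole: if $V$ is invariant the inequality forces zero relevant poles, a contradiction, while if $V$ is $2$-periodic it allows at most one, and the residual single-pole case is excluded by the chaining observation above, namely that a ring surrounding $b$ whose own hole contains a pole forces the next ring also to surround $b$, producing a second relevant pole.

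I expect the main obstacle to be precisely this middle step: the argument-principle count is local and clean, but converting the nesting of rings around $b$ into the quantitative bound by $q$, and closing the borderline two-periodic configuration, requires controlling how the $V$-cycle and the Herman-ring cycle interleave globally rather than just the winding behaviour along a single core curve.
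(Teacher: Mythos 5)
First, a point of order: the paper does not actually prove this lemma --- it is imported verbatim as Theorem 3.8 of the cited reference \cite{relevantpole}, so there is no in-paper proof to compare against. Judging your attempt on its own merits: the first two paragraphs are sound and give the right local computation. The return map of a Herman ring has degree one, so each $f\colon H_i\to H_{i+1}$ is a conformal bijection; the argument principle applied to $f-b$ over the core curve $\Gamma_i$, together with the fact that $b$ is omitted, gives that the number of poles (with multiplicity) in the hole $B_i$ equals $-n\bigl(f(\Gamma_i),b\bigr)\in\{0,1\}$, and that $B_i$ contains a pole exactly when $H_{i+1}$ surrounds $b$. This is correct modulo minor care about whether $b$ can lie on $\partial H_{i+1}$ or inside the ring itself.

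The genuine gap is the third paragraph, and it is not a minor one: everything up to that point only yields ``number of $H$-relevant poles $\le$ number of rings of the cycle surrounding $b$,'' which is bounded by $p$, the period of the \emph{ring} --- the period $q$ of $V$ has not entered at all, and the whole content of the lemma is the bound by $q$. Your proposed intermediate statement (at most $q-1$ rings of the cycle surround $b$) would indeed suffice, but you give no argument for it, and the mechanism you gesture at is where the real work lies: one must track where the cycle $V_0,\dots,V_{q-1}$ sits relative to the nested holes, using the fact that when $B_j$ contains its (unique, simple) pole, $f$ maps the Jordan interior of $\Gamma_j$ \emph{bijectively onto the unbounded side} of $f(\Gamma_j)$, so that $V_1$ is then forced outside $B_{j+1}$; it is this inside/outside bookkeeping against the $q$-periodicity of $V$ that produces the contradiction (already in the invariant case, where your count alone would still permit one relevant pole shared by all the nested holes). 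Your closing of the $2$-periodic case is also not right as stated: ``a ring surrounding $b$ whose hole contains a pole forces the next ring to surround $b$, producing a second relevant pole'' does not produce a second pole, because the holes surrounding $b$ are nested and each contains exactly one pole counted with multiplicity, so they may all share the \emph{same} pole; and the invariant case is disposed of by invoking the very inequality that is still unproved. So the skeleton is plausible and the local lemma is correct, but the global step that makes the theorem true is missing.
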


A proof of Theorem~\ref{nohermanring} follows.
\begin{proof}[Proof of Theorem~\ref{nohermanring}]
Let $\{U_1,~U_2 \}$ be a cycle of $2$-periodic Fatou components of $f$. If $U_1$ is a Baker domain  then $U_i$ is unbounded and $f^{2n} \to \infty$ locally uniformly on $U_i$ for some $i$. Without loss of generality assume $i=1$. Since the bov is the only asymptotic value, it follows from the proof of the previous theorem that the bov is on the boundary of $U_2$. Thus the bov is in the closure of $U_2$. If $U_1$ is an attracting domain or a parabolic domain containing the bov then clearly its closure contains the bov. Now, it follows from Lemma~\ref{periodicfatoucomponent} that $f$ has no Herman ring.
\end{proof}

\section{Examples } 
 A Baker wandering domain of an entire or a meromorphic function $f$ is a multiply connected wandering domain $W$ such that $f^n \to \infty$ locally uniformly on $W$ such that $c(W_n) >1$ for all $n$ and $W_n$ surrounds the origin for all  sufficiently large $n$.  These domains are not only bounded but also ensures that all other Fatou components are bounded.   
If an entire function has a Baker wandering domain then $\infty$ is its bov (Theorem $2.3$, \cite{bov} ). That the converse is not true is demonstrated by the function $\lambda + z +e^z$ for $0 \leq \lambda < 1$. One can show that there are infinitely many unbounded invariant attracting domains  for this function ruling out Baker wandering domains even though $\infty$ is its bov~(Remark 3, \cite{bov}). The function $\frac{1}{e^z +z}$ and some of its variants are used to construct examples demonstrating some of the results proved in the previous sections.

\subsection{ The function $ \frac{\lambda}{e^z +z}$}
Let $ f(z)=\frac{1}{e^z +z}$ and $ f_{\lambda}(z)=\lambda f(z), \lambda >0$.
 Following are few basic properties of $f$ on the real line, the proofs of which follow by analysing the first and second derivatives of the function. 
\begin{obs}\label{bpf}

\begin{enumerate}
\item The function $f$ is differentiable on $ \mathbb{R}$ except at the pole  $x_0\approx -0.55$.
\item Since $f^\prime(x)= -\frac{1+e^x}{(x+e^x)^2} < 0$ for all $x \in \mathbb{R} \setminus \{x_0\}$, the function  $f$ is decreasing in $(-\infty, x_0)$ and in $(x_0, \infty)$.
\item The function $f^{\prime}$ is increasing in $[0, \infty)$ because $f^{\prime \prime}(x) = \frac{2+4e^x+e^x(e^x-x)}{(x+e^x)^3}>0$ for all $x \geq 0$. This follows by observing that $e^x -x >0$ for all $x \geq 0$.
\item The function  $f^2$ is increasing in $(-\infty, x_{-1}), (x_{-1}, x_0)$ and $(x_0, \infty)$ where $x_{-1}$ is the unique real preimage of $x_0$. This is so because  $(f^2)^\prime(x)=f^\prime(f(x))f^\prime(x) >0 $ for all $x \in \mathbb{R}\setminus \{ x_{-1}, x_0\}$.
\item  $\lim\limits_{x\rightarrow x_0^+}f(x)=\infty$ and $\lim\limits_{x\rightarrow x_0^-}f(x)=-\infty$.

\end{enumerate}
\end{obs}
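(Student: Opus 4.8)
The plan is to derive everything from a single structural fact about the denominator $g(x)=e^{x}+x$: since $g'(x)=e^{x}+1>0$ for all real $x$, the map $g$ is strictly increasing on $\mathbb{R}$. This monotonicity alone settles item (1): $g$ has exactly one real zero, which is the pole $x_0$, and $f=1/g$ is differentiable precisely where $g\neq 0$, i.e.\ on $\mathbb{R}\setminus\{x_0\}$. First I would record the quotient-rule computation $f'(x)=-g'(x)/g(x)^{2}=-(e^{x}+1)/(e^{x}+x)^{2}$; the numerator and the squared denominator are both positive wherever $f$ is defined, so $f'<0$ throughout, which gives the strict decrease of $f$ on each of $(-\infty,x_0)$ and $(x_0,\infty)$ asserted in item (2).

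For item (3) I would differentiate once more and simplify to the stated closed form $f''(x)=\bigl(2+4e^{x}+e^{x}(e^{x}-x)\bigr)/(e^{x}+x)^{3}$. On $[0,\infty)$ the denominator is positive because $e^{x}+x\geq 1$ there, so the sign of $f''$ is that of the numerator. The one point that deserves an explicit argument is the positivity of this numerator, and here the decisive ingredient is the inequality $e^{x}-x>0$, valid for \emph{all} real $x$; this follows by noting that $x\mapsto e^{x}-x$ attains its global minimum $1$ at $x=0$. Every summand of the numerator is then positive, so $f''>0$ on $[0,\infty)$ and $f'$ is increasing there.

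Item (4) I would obtain from the chain rule $(f^{2})'(x)=f'(f(x))\,f'(x)$, which is a product of two negative quantities (applying item (2) both at $x$ and at $f(x)$) and hence positive wherever both factors exist. The excluded points are $x_0$, where $f$ itself is not differentiable, and the real solutions of $f(x)=x_0$; the latter equation rearranges to $g(x)=1/x_0$, which has a unique real root $x_{-1}$ by the strict monotonicity of $g$, and one checks $x_{-1}<x_0$ from $1/x_0<0=g(x_0)$. This splits $\mathbb{R}$ into the three intervals claimed, on each of which $f^{2}$ is increasing. Finally, item (5) reads off the sign of $g$ near its zero: as $x\to x_0^{+}$ one has $g(x)\to 0^{+}$ so $f\to+\infty$, and as $x\to x_0^{-}$ one has $g(x)\to 0^{-}$ so $f\to-\infty$, using once more that $g$ is increasing with $g(x_0)=0$.

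I expect no genuine obstacle here, since the argument is elementary calculus; the only places needing more than mechanical differentiation are the positivity of the $f''$ numerator, for which $e^{x}>x$ is the key, and the bookkeeping that isolates the non-differentiability points $x_0$ and $x_{-1}$ and confirms their uniqueness and ordering, all of which reduce to the strict monotonicity of $g$.
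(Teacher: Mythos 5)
Your argument is correct and follows essentially the same route the paper intends, namely reading everything off the first and second derivatives (the paper itself only sketches this, embedding the computations of $f'$ and $f''$ in the statement). You additionally make explicit a few details the paper leaves implicit --- the uniqueness of $x_{-1}$ via the strict monotonicity of $g(x)=e^{x}+x$, the ordering $x_{-1}<x_0$, and the sign of $g$ near $x_0$ for item (5) --- all of which check out.
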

 
The graphs of $f$ and $f^2$ are given in Figure~\ref{graph1} and computationally it is found that there is a positive fixed point $p\approx0.49$ and a negative fixed point $q \approx -1.16$ of $f$.

\begin{figure}[H]
	\centering
\subfloat[ Graph of $f(x)$]
{\includegraphics[width=1.73in,height=1.73in]{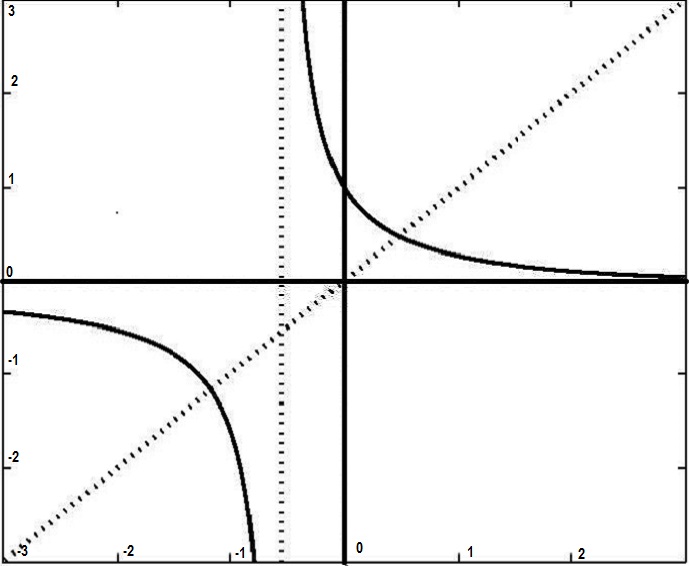}}
\hspace{0.00001cm}
\subfloat[Graph of $f^2(x)$]
{\includegraphics[width=1.73in,height=1.73in]{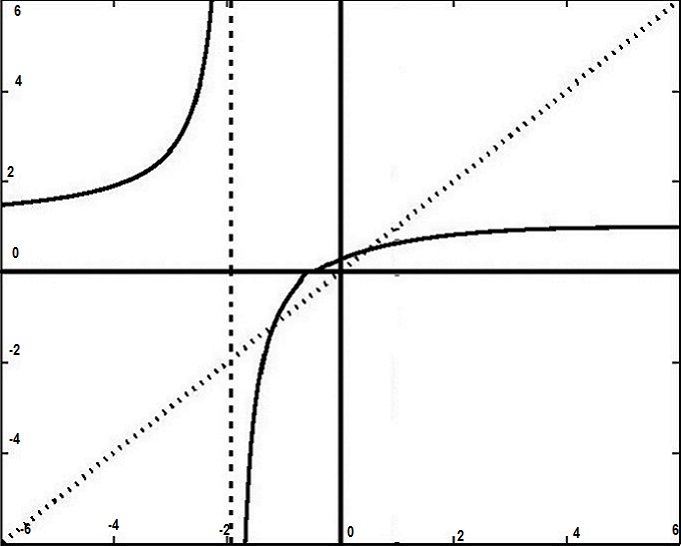}}
\hspace{0.00001cm}
\subfloat[Graph of $\phi(x)= \frac{e^x(1-x)}{(e^x+x)^2}$]
 {\includegraphics[width=1.73in,height=1.73in]{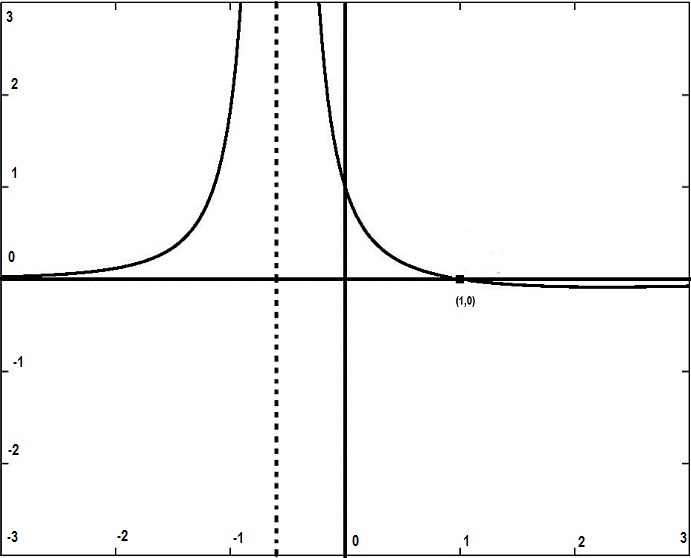}}
\caption{Graphs }	
\label{graph1} \end{figure}

For each $k\in \mathbb{Z}$,  $z_k=i\pi(2k+1) $ is a critical point of $f$.  There is exactly one critical point corresponding to each critical value. In other words, exactly one preimage of each critical value is a critical point.  
Since $\infty$ is the bov of $z + e^z$ (See Remark~\ref{onlylimitpoint}), $0$ is the bov  of $\frac{1}{e^z+z}$. There are no other asymptotic values of $f$ by Lemma~\ref{bov-implication1}. 
\par  The point $0$ is the bov  of $f_{\lambda}$ for every $\lambda \in \mathbb{C} \setminus \{0\}$. The critical values of $f_\lambda$ are of the form  $ \frac{\lambda}{-1+i\pi(2k+1)}$  for $ k\in \mathbb{Z}$ and its limit point is $0$. Here the bov is the only limit point of all singular values.
Note that $x_0$ remains to be a pole of $f_\lambda$.

%

\begin{ex}\label{totally}
Let $f_\lambda(z)=\frac{\lambda}{e^z+z}$ for $\lambda >0$.
\begin{enumerate}
\item (\textbf{Totally disconnected Julia sets}) For $0<\lambda <0.05$, the Fatou set  of $f_\lambda$ is a completely invariant attracting domain containing the bov and the Julia set is totally disconnected.
 
\item  (\textbf{Non-invariant multiply connected Fatou components}) For { $\lambda >1+e$}, the Fatou set of $f_\lambda$ contains a $2$-cycle of attracting or parabolic domains containing the bov and the Julia set is disconnected but not totally disconnected.
\end{enumerate}

\end{ex}

\begin{proof}

 Let $g_{\lambda}(x)=\lambda f(x)-x $ for $x\in \mathbb{R}$ and $\lambda >0$. The roots of $g_{\lambda}(x) $ are the fixed points of $ f_{\lambda}$. Since $f_{\lambda}^{\prime}(x)= \lambda f^\prime(x)<0$, $g_{\lambda}^{\prime}(x)=\lambda f^{\prime}(x)-1<0$ for  $ x\in \mathbb{R} \setminus \{x_0\} $ giving that $g_\lambda$ is strictly decreasing. Now $g_{\lambda}(0)=\lambda>0$, $\lim\limits_{x\rightarrow +\infty}g_{\lambda}(x)=-\infty$ and $g_{\lambda}(x)$ is continuous on $[0, \infty]$. By the Intermediate Value Theorem, there is a unique positive real number $x_{\lambda}$ such that $g_{\lambda}(x_{\lambda})=0$. 
  Since $f_\lambda(x)>0$ for all $x\in (x_0,0)$, $f_\lambda$ has no fixed point in $(x_0,0)$ and hence, $x_\lambda$ is the only fixed point of $f_\lambda$ in $(x_0, \infty)$.
 
 Now $\lim\limits_{x\rightarrow -\infty}g_{\lambda}(x)=\infty$ and $\lim\limits_{x\rightarrow x_0^-}g_{\lambda}(x)=-\infty$. Again by the Intermediate Value Theorem, there is a negative real number 
$\tilde{x_{\lambda}}$ such that $g_{\lambda}(\tilde{x_{\lambda}})=0$.
Therefore, the only real fixed points of $f_\lambda$ are $x_\lambda$ and   $\tilde{x_{\lambda}}$.
\par
 In order to determine the nature of these fixed points we define
$\phi(x)=xf^{\prime}(x)+f(x)=\frac{- x(e^{x}+1)}{(e^{x}+x)^2}+\frac{1}{e^{x}+x}=\frac{-x(e^{x}+1)+e^{x}+x}{(e^{x}+x)^2}=\frac{e^x(1-x)}{(e^{x}+x)^2} ~\mbox{for}~ x \geq 0$.
Observe that 
\begin{gather}{\label{200}} 
 \phi(x)
  \begin{cases}
  >0,~for~x<x_0\\
  >0,~for~x_0<x<0\\
   >0,~for ~0\leq x< 1\\
    =0,~for~x=1\\
     <0,~ for~ x>1
  \end{cases}.
\end{gather}
 Further, $ \phi(x)\rightarrow 0$ as $x\rightarrow \infty$. See Figure~\ref{graph1}.

\begin{enumerate}
	\item 

 Note that $g_\lambda(0)=\lambda>0$. For $0<\lambda<0.05$, $g_\lambda(1)=\frac{\lambda}{e+1}-1<0$. Thus $0<x_\lambda<1$. It follows from Equation  \ref{200} that $\phi(x_\lambda) > 0$, i.e., $x_\lambda f^{\prime}(x_\lambda)+f(x_\lambda) >0$. This gives that
$ \lambda f^{\prime}(x_\lambda)+\lambda\frac{f(x_\lambda)}{x_\lambda}>0$ and consequently, $\lambda  f^{\prime}(x_\lambda) >-1$. Since $\lambda  f^{\prime}(x)<0$ for $x >0$,  $x_\lambda$ is an attracting fixed point of $f_\lambda$. 

\par 
 It follows from Equation \ref{200} that $\phi(x) >0$ for all $x<x_0$ which means that $\phi(\tilde{x_{\lambda}}) > 0$. This gives that $\tilde{x_{\lambda}}f^\prime(\tilde{x_{\lambda}})+f(\tilde{x_{\lambda}})>0$, and $\lambda\tilde{x_{\lambda}}f^\prime(\tilde{x_{\lambda}})+\lambda f(\tilde{x_{\lambda}})>0$ as $\lambda >0$. Further, since $\tilde{x_\lambda} <0$, $f_\lambda^\prime(\tilde{x_{\lambda}})<-1$. Therefore $\tilde{x_{\lambda}}$ is repelling.

 For $0<\lambda<0.05$, $f^\prime_\lambda(0)=-2\lambda>-1$. Since $f^\prime_\lambda$ is increasing in $[0,\infty)$ by Observation $\ref{bpf}(3)$, $-1 < f^\prime_\lambda(x) < 0$ for all $x \geq 0$. For $h_\lambda(x) = f^2_\lambda(x)-x$, $0<\lambda< 0.05$, observe that $h^\prime_\lambda(x)<0$ in $[0,x_\lambda)$. Note that $h_\lambda(x_\lambda)=0$ and $h_\lambda$ is decreasing in $[0, x_\lambda]$. Hence, $h_\lambda(x)>0$ for all $x \in [0,x_\lambda)$. As $f_\lambda^2$ is increasing, $f^2_\lambda(x)>x$  and $f^{2n}_\lambda(x)< x_\lambda $ for $0<x<x_\lambda$. Therefore, $\{f^{2n}_\lambda(x)\}_{n>1}$ is convergent for all $0 \leq x \leq  x_\lambda$. This limit point is nothing but $x_\lambda$. Since $f_\lambda([0,x_\lambda]) \subsetneq  [x_\lambda, \infty)$ and $f_{\lambda}([x_\lambda, \infty)) \subsetneq (0, x_\lambda]$, $f^{2n+1}_\lambda(x) \rightarrow f_\lambda(x_\lambda)=x_\lambda$ for all $x \in [0, x_\lambda]$ and consequently,  $\lim_{ n \to \infty}f^n_\lambda(x) =x_\lambda$ for all $x \in [0, \infty)$. The invariant attracting domain $U$ corresponding to $x_\lambda$ contains $0$, the bov. Therefore,  $U$ is completely invariant by Theorem \ref{invariant-CIFC}.
\par 
 For  $|z|=0.5$, $|\frac{\lambda}{z+e^z}|\leq \frac{\lambda}{|e^z|-|z|}= \frac{\lambda}{|e^z|-0.5}= \frac{\lambda}{e^{\Re(z)}-0.5}$. Note that the function $x \mapsto e^x -0.5$ is a strictly increasing function in $[-0.5,0.5]$ attaining its minimum at $-0.5$. The minimum value is approximately equal to $0.106$.  Hence 
 
 $|\frac{\lambda}{z+e^z}| \leq \frac{\lambda}{0.1} <0.5 $. In other words, $f_\lambda (D)$ is strictly contained in $D$ where $D= \{z: |z| \leq 0.5\}$, and consequently $\{f_\lambda ^n\}_{n>0}$ is normal on $D$ by the Fundamental Normality Test. The Fatou set of $f_\lambda$ contains $D$. Now observe  that all the critical values of $f_\lambda$ are of the form $\frac{\lambda}{-1+i\pi(2k+1)}$ for $ k\in \mathbb{Z}$ and the maximum of their moduli is $\frac{\lambda}{\sqrt{1+\pi^2}} < 0.5$ (which is attained for $k=0$).  Thus $D$ contains all the singular values of $f_\lambda$ and their forward orbits. Since $U$ contains at least one singular value of $f_\lambda$, $U \cap D \neq \emptyset$ and hence $D \subset U$. It follows from Theorem~\ref{invariant-CIFC} that the Julia set of $f_\lambda$ is totally disconnected for $0< \lambda< 0.05$.


\item  For $\lambda >1+e$,  $g_\lambda(1) = \frac{\lambda}{1+e}-1 >0$ and $\lim\limits_{x\rightarrow +\infty}g_{\lambda}(x)=-\infty$ implying that $x_\lambda >1$. By Equation \ref{200}, $\phi(x_\lambda)<0.$ It gives that $\frac{\phi(x_\lambda)}{f(x_\lambda)}=f_{\lambda}^{\prime}(x_\lambda)+1<0$ and hence $f_{\lambda}^{\prime}(x_\lambda)<-1$.   
Therefore, $x_\lambda$ is a repelling fixed point of $f_\lambda$ for all $\lambda>1+e$.

\par  Let $h_\lambda(x) = f^2_\lambda(x)-x$. Note that $h_\lambda(0)= f^2_\lambda(0) > 0$ and $h_\lambda(1)=  \frac{\lambda}{  \frac{\lambda}{1+e}+ e^{\frac{\lambda}{1+e}}}-1$. Taking $\alpha=\frac{\lambda}{1+e}$, it is observed that $ \psi(\alpha)=\frac{\alpha (1+e)}{\alpha+e^{\alpha}} 
 < 1$ for all $\alpha >1$. In other words, $h_\lambda(1)< 0$. Thus there exists  $a_1 \in (0, 1)$ such that $h_\lambda(a_1)=0$ and hence $f^2_\lambda(a_1)=a_1$.  Choose $a_1$ to be the smallest positive $2$-periodic point. Here we do not rule out the possibility of other $2$-periodic points. Thus
 \begin{gather}{\label{f2}} 
f^{2}_\lambda(x)-x
\begin{cases}
 >0~for ~0\leq x< a_1\\
 =0~for~x=a_1 
\end{cases}.
\end{gather}

Since $f^2_\lambda$ is an increasing function on $[0,a_1]$, using Equation \ref{f2} we have $x < f^2_\lambda(x) < f^4_\lambda(x)< \dots < a_1$, for all $x \in [0, a_1)$ and $\lim\limits_{n\rightarrow \infty}f_\lambda^{2n}(x)=a_1$. This implies that $a_1$ is either an attracting or a parabolic periodic point of period $2$.

\par 
Let $f_\lambda(a_1)=a_2$ and $U_i$ be the attracting or the parabolic domain containing  $a_i$ for $i=1,2$. It is already seen that $[0, a_1) \subset U_1$. Since  $f_\lambda$ maps $(x_0, a_1)$ onto $(a_2, \infty)$ and  $(a_2, \infty)$ onto $(0, a_1)$, $U_2$ is unbounded.
  It is infinitely connected by Theorem~\ref{tfc} (2). It also follows from Theorem \ref{tfc} that $U_1$ is bounded and infinitely connected. Therefore, the Julia set is disconnected but not totally disconnected.
\end{enumerate}
\end{proof}

For $\lambda > e+1$, the above theorem  does not describe the Fatou set completely. The following remarks contain  some more information on the dynamics of $f_\lambda$. 
\begin{rem}

\begin{enumerate}

\item It is worth mentioning that $x_\lambda \rightarrow 0$ whenever $\lambda \rightarrow 0$. In order to verify this, note that $\lambda = x_\lambda(x_\lambda +e^{x_\lambda})$ and the function $x \mapsto x(x+e^x)$ is strictly increasing. In particular, it is a homeomorphism of $(0,\infty)$ and $x_\lambda$ is a continuous function of $\lambda$. 

\item The existence of $2-$periodic points are proved in the above example. We claim that $f_\lambda $ does not have any real positive periodic point of period greater than two. Note that $f_\lambda((x_0,x_\lambda])=[x_\lambda, \infty)$ and $f_\lambda([x_\lambda, \infty))=(0, x_\lambda] $. This shows that $f_\lambda$ does not have any positive periodic point of odd period. 
If  $f_\lambda$ has a periodic point $r$ of even period greater than two then either $f_\lambda^2(r)>r$ or $f_\lambda^2(r)<r$. Since $f^2_\lambda$ is increasing, either $r < f^2_\lambda(r)< f^4_\lambda(r) < ...$ or $r > f^2_\lambda(r)> f^4_\lambda(r)>...$ proving that $f$ cannot have any periodic point of even period bigger than $2$. 
\item It remains to be checked whether $f_\lambda$ has a $2$-periodic point different from the cycle $\{a_1, f_{\lambda}(a_1) \}$.

\end{enumerate}

\end{rem}
 
\subsection{Attracting domains from Baker wandering domains}
Consider the function $Cz^2 \prod_{n=1}^{\infty} (1+\frac{z}{\gamma_n})$ where $1<\gamma_1<\gamma_2<....<\gamma_n<..$, $C= \frac{1}{4e}$, $\gamma_1>4e$ and $\gamma_{n+1}=C\gamma_n^2(1+\frac{\gamma_n}{\gamma_1})(1+\frac{\gamma_n}{\gamma_2})\dots (1+\frac{\gamma_n}{\gamma_n})$.  Baker proved that this function has Baker wandering domains~\cite{bwd}. The point at $\infty$ is the bov of this entire function. In fact, this is true for all entire functions with a Baker wandering domain, as observed in ~\cite{bov}. This fact is crucial for the next example.

\begin{ex}
Let $g$ be an entire function having a Baker wandering domain $W$ and $b\in W \setminus g^{-1}(0)$. Then $b$ is the bov of $f_2(z)=\frac{\epsilon}{g(z)}+b$ for every $\epsilon >0$. The bov is contained in a completely invariant attracting domain of $f_2$ for suitable values of $\epsilon$.
\label{from-BWD}
\end{ex}

\begin{proof}
Since $g$ has a Baker wandering domain, $g$ does not omit any point in $\mathbb{C}$. In particular, the point $0$ is not an omitted value of $g$.
 Since $b\in W \setminus g^{-1}(0)$, there exists a disk $D_{r}(b)$ inside $W$ such that  $0 \notin \overline{g(D_r(b))}$. Choose $\epsilon >0$ such that $|\frac{\epsilon}{g(z)}|<\frac{r}{2}$ for all $z \in D_r (b)$. This is possible because $g(D_r (b))$ is a bounded set away from the origin. Then $f_2 (D_r (b) ) \subsetneq  D_r(b)$ and consequently  $f_2^n(D_r(b))\subsetneq D_r (b) $ for all $n$. This gives that $D_r (b)$ is contained in a Fatou component of $f_2$, by the Fundamental  Normality Test. This Fatou component $U$
  is an attracting domain. Indeed, the ball $D_r (b)$ contains a fixed point and that must be attracting by the Schwarz's lemma. Further, $U$ is invariant and contains $b$, the bov of $f_2$.  It follows from Theorem \ref{invariant-CIFC} that $U$ is completely invariant.  
\end{proof}
 \subsection{Two invariant attracting domains}
\begin{ex} There is a meromorphic function with stable bov such that it has two invariant attracting domains. One of these attracting domains contains the bov and is completely invariant.
	\label{two-attracting}
\end{ex}
 \begin{proof}
Consider $f_3(z)=\frac{0.1}{z^9 +e^z}-0.99$. There is a unique real pole at $x_3 \approx -0.904$ and the function $g_{3}(x)=f_{3}(x)-x$ is strictly decreasing in $(-\infty, x_3) \cup (x_3, \infty)$. 
\par 
 Since $\lim_{x \to x_{3}^+} g_{3}(x)=\infty$ and $g_{3}(-0.72) \approx -0.04 <0$,  $f_3$ has a fixed point $a_1$ in $I_1 =[x_3,- 0.72]$. Similarly, $g_{3}(-1.069) \approx 0.011>0$ and $g_{3}(-1) \approx -0.148<0$ gives that  $f_3$ has a fixed point $a_2$ in   $ I_2 = [- 1.069, -1]$. Note that each fixed point $\hat{z}$ of $f_{3}$ satisfies $\frac{0.1}{z^9 +e^z}=z+0.99$ and its multiplier is $-10(\hat{z}+0.99)^2 (9 \hat{z}^8 +e^{\hat{z}})$. To determine the multipliers of the real fixed points, we analyze $h(x)= 10(x+0.99)^2 (9 x^8 +e^{x})  $ in the intervals  $I_1$ and $I_2$. 
\par 
Note that $h'(x)=10(x+0.99)(90 x^8 +2 e^x+xe^x+71.28 x^7+0.99e^x)$. Rewriting it as $h'(x)=10(x+0.99)(9 x^7(10x +7.92) +(x+0.99)e^x+2e ^x)$, it is seen that $h'(x)>0$ for all $x \in (-0.99, -0.792]$. In order to show that $h'(x)>0$ in $J=(-0.792, -0.72]$, we analyze  $p(x)=90 x^8 +71.28 x^7+2e^x >0$ in $J$ and show that $p(x)>0$. 
\par 
We make a repeated use of the following facts about  every continuous function $q: [s,t] \to \mathbb{R}$.
\begin{equation}\label{increasing}
\mbox{If}~  q(t) <0 ~\mbox{ and }~q'(x)>0 ~\mbox{then} ~q(x)<0 ~\mbox{for all }~ x \in [s,t]
 \end{equation}  
 \begin{equation}\label{decreasing}
\mbox{If}~  q(t) >0~\mbox{and }~  q'(x)<0~\mbox{then} ~q(x)>0 ~\mbox{for all }~ x \in [s,t]
 \end{equation} 
Also we need the approximate numerical values of successive derivatives of $p$ at $t=-0.72$.
\\

\begin{tabular}{|c|c|c|c|c|c|c|c|}\hline
$p(t)$ &  $p^{'}(t)$ & $p^{''}(t)$&$p^{'''}(t)$&$p^{(iv)}(t)$&$p^{(v)}(t)$& 
$p^{(vi)}(t)$& $p^{(vii)}(t)$  \\
\hline  
 $0.3$ &  $-1.7$ & $123$&$-1800$&$ 18000$&$-13 \times 10^4 $& 
$6.8 \times 10^5$& $-2.2 \times 10^6$ \\ \hline 
\end{tabular} 
\\
 
Since $p^{(vii)}(t) <0$ and $p^{(viii)}(x)=90(8 !)+2 e^x>0$, $p^{(vii)}(x)<0$ by Equation~\ref{increasing}. Applying Equation~\ref{decreasing} for $q(x)=p^{(vi)}(x)$ we get $p^{(vi)}(x)>0$. Again applying Equation~\ref{increasing} for $q(x)=p^{(v)}(x)$, it is found that $p^{(v)}(x)<0$. Repeating this argument, we finally find that 
$p'(x)<0$ and $p$ is
strictly decreasing in $J$. As $p(t)>0$, $p(x)>0$ for all $x \in J$.

\par Therefore, $h$ is strictly increasing in $(-0.99, -0.72]$. Note that  $0< h(x) < h(-0.72) \approx 0.828$ which means that the multiplier of every fixed point of $f_3$ lying in $[-0.99,-0.72]$ is in $(0,1)$.  Since the fixed point $a_1$ is in $ I_1 \subset (-0.99, -0.72 ] $, $a_1$ is attracting.
\par
Now, rewrite $h'(x)$ as $10(x+0.99)(72 x^7(x+0.99) +2.99 e^x+(xe^x+18 x^8))$ and note that $x e^x + 18 x^8 > 0$.  This is so because $x e^x + 18 x^8 $  is a strictly decreasing function in  $I_2=(-1.069, -1)$ and its minimum value in this interval is positive. It is seen that $h'(x)<0$ for all $x \in I_2$. Thus, $0 < h(x)  < h(-1.069) \approx 0.979$ for all $x \in I_2$ and $a_2$ is attracting. 

\begin{figure}[H]
\centering
\subfloat[Graphs of $ \frac{0.1}{e^x+x^9}-0.99-x$ and the derivative of $ \frac{0.1}{e^x+x^9}-0.99$]
{\includegraphics[width=2.6in,height=2.6in]{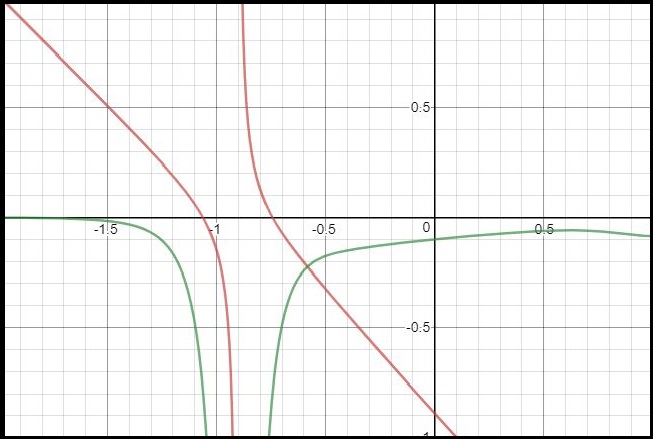}}
 \hspace{0.01cm}
\subfloat[The Fatou set of $ \frac{0.1}{e^z+z^9}-0.99$]
{\includegraphics[width=2.6in,height=2.6in]{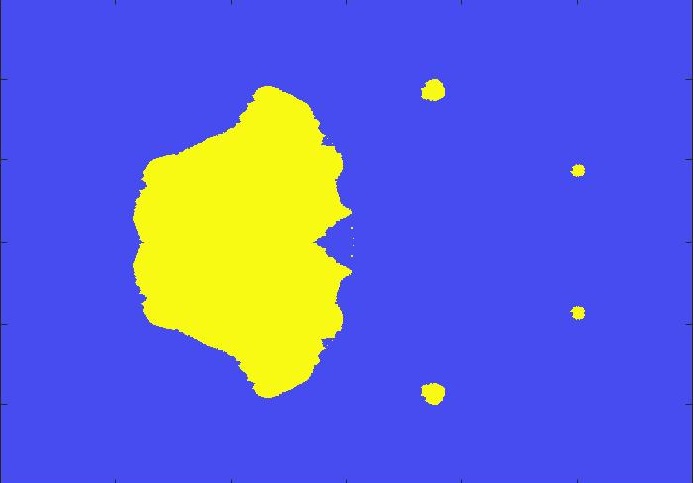}}
\caption{The two attracting domains of $ \frac{0.1}{e^z+z^9}-0.99$}	 
\label{Fatouset-2}
\end{figure}

It is important to note that the bov is in the (immediate)  attracting domain of $a_2$. This is because, $f_{3}$ maps $(-\infty, x_0)$ onto $(-\infty, -0.99)$ which is mapped onto $I=(f_{3}(-0.99), -0.99)$ by $f_3$, where $f_{3}(-0.99) \approx -1.17$. It can be seen numerically that $f_{3}^{7}(-0.99) \approx -1.08, f_{3}^{8}(-0.99) \approx -1.05 $ and  $f_{3}^{7}(I) \subset I $. Further, $f_{3}'(f_{3}^{7}(-0.99) ) \approx -0.613$,  $f_{3}'(f_{3}^{8}(-0.99) ) \approx -0.94$ and therefore $|f_{3}'(x)|<1$ for all $x \in f_{3}^{7}(I)$. Therefore, the attracting domain of $a_2$ is completely invariant. It is shown  blue in  Figure~\ref{Fatouset-2}(b). The attracting domain of $a_1$ is shown  yellow in Figure~\ref{Fatouset-2}(b). 
\end{proof}
\begin{rem}
The attracting domain of $a_1$ contains at least one critical point. This must be non-real as $f_{3}'(x) <0$ for all real $x$. Motivated by the Figure~\ref{Fatouset-2}(b), we conjecture that the Fatou set of $f_3$ is the union of the basins of attractions of these two attracting fixed points, $a_1$ and $a_2$.
\end{rem}
 \subsection{Invariant Siegel disks}
\begin{ex}
	There is a meromorphic function with bov which has an invariant Siegel disk. The Siegel disk is bounded if the bov is stable.
\label{Siegel}\end{ex}
\begin{proof}
Let $\lambda$ be such that $-2 \lambda =e^{i 2 \pi t}$ for a Bryuno number $t$ and consider the function $f_{4}(z)=\lambda(\frac{1}{z+e^z}-1)$. Then $-\lambda$ is the bov of $f_4$ and  $0$ is an irrationally indifferent fixed point  with multiplier $e^{i 2 \pi t}$. Thus, there is an invariant Siegel disk containing $0$ (See Figure~\ref{Fatouset-3}(b)). If the Siegel disk is unbounded then the image of its boundary must accumulate at the bov and therefore, the bov is on the boundary of the Siegel disk. In other words, if the bov is stable then the invariant Siegel disk is bounded. 
\begin{figure}[H]
\centering
\subfloat[Parabolic domain:  $ \lambda =0.5$]
{\includegraphics[width=2.5in,height=2.5in]{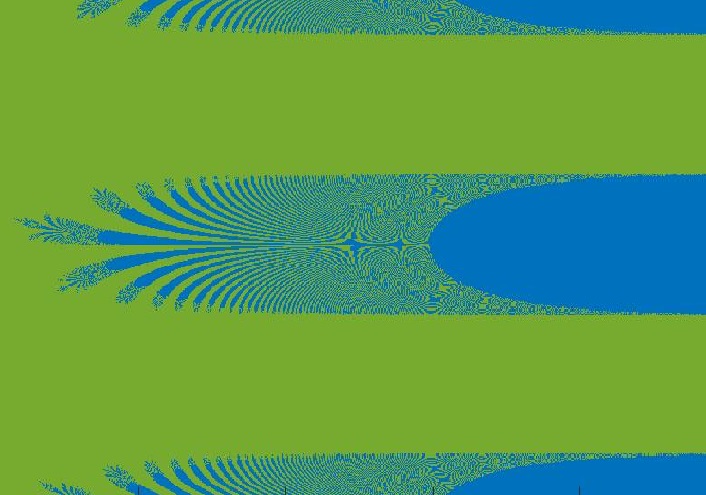}}
\subfloat[Siegel disk: $\lambda=e^{2 \pi i(\frac{\sqrt{5}-1}{2})}$]	{\includegraphics[width=2.5in,height=2.5in]{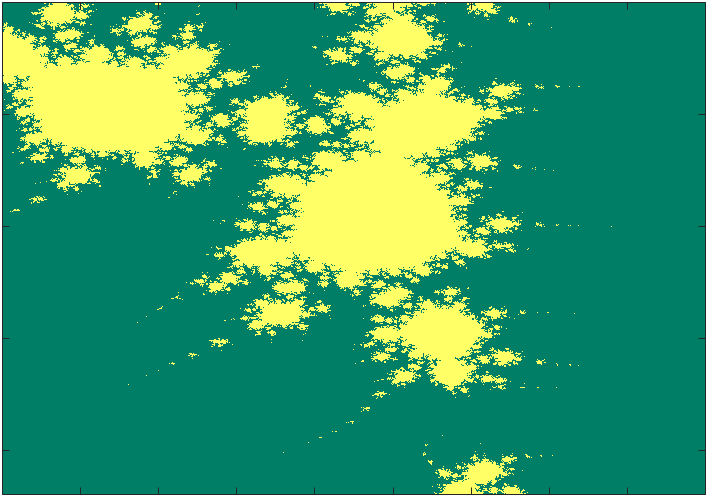}}
\caption{ Parabolic domain and Siegel disk of $\lambda(\frac{1}{z+e^z}-1)$}	 
\label{Fatouset-3}
\end{figure}
\end{proof}
\begin{rem}
For $\lambda =0.5$, the function $\lambda(\frac{1}{z+e^z}-1)$ has an invariant parabolic domain, shown green in Figure~\ref{Fatouset-3}(a). 
\end{rem}

\section{Concluding Remarks}
We conclude with few remarks leading to some directions of research.
\par Lemma~\ref{critically infinite} proves that the bov is always a limit point of the critical values of the function.  A limit point of critical values different from the bov seems to be impossible. Though all the examples discussed in this article confirm this, it remains to be proved.

\par
If the bov is stable then the function has a unique unbounded Fatou component (Lemma~\ref{singleton}). It is yet to be known whether a function  has an unbounded Fatou component when bov is unstable. 
\par
In presence of a stable bov, the Julia components not intersecting the backward orbit of $\infty$ are all bounded and are mapped onto each other. Their topology can be investigated.  Further, assuming that the bov is the only limit point of critical values, an upper bound for the number of periodic cycles of Fatou components may exist. In the absence of recurrent critical points, these issues seem to be tractable.  
\subsection*{Acknowledgment}
This work was initiated during the Fast Track Project (SR-FTP-MS019-2011) funded by the Department of Science and Technology, Govt of India. The second and the third authors were part of it.

\end{document}